\newcommand{\pten}{\ensuremath{\widehat{\otimes}_\pi}}
\newcommand{\N}{\mathbb{N}}
\newcommand{\sub}{\subseteq}
\def\epsilon{\varepsilon}
\newtheorem{theo}{Theorem}[section]
\newtheorem{lem}[theo]{Lemma}
\newtheorem{pro}[theo]{Proposition}
\newtheorem{cor}[theo]{Corollary}
\newtheorem{defi}[theo]{Definition}
\newtheorem{rem}[theo]{Remark}
\newtheorem{exa}[theo]{Example}
\newtheorem{assumption}[theo]{Assumption}
\numberwithin{equation}{section}
\title{On weak compactness in projective tensor products}
\author{Jos\'{e} Rodr\'{i}guez}
\address{Dpto. de Ingenier\'{i}a y Tecnolog\'{i}a de Computadores\\Facultad de Inform\'{a}tica\\
Universidad de Murcia\\ 30100 Espinardo (Murcia)\\ Spain} \email{joserr@um.es}
\subjclass[2020]{46B28, 46B50}
\keywords{Projective tensor product; weakly compact set; conditionally weakly compact set; strongly weakly compactly generated Banach space}
\thanks{The research is partially supported by grants MTM2017-86182-P 
(funded by MCIN/AEI/10.13039/501100011033 and ``ERDF A way of making Europe'') and 
20797/PI/18 (funded by {\em Fundaci\'on S\'eneca})}
\begin{document}

\begin{abstract}
We study the property of being strongly weakly compactly generated (and some relatives) in projective tensor products of Banach spaces. 
Our main result is as follows. Let $1 < p,q<\infty$ be such that $1/p+1/q\geq 1$. Let $X$ (resp.,~$Y$) be a Banach space with a countable unconditional finite-dimensional Schauder decomposition having a disjoint lower $p$-estimate (resp.,  $q$-estimate). If $X$ and $Y$ are strongly weakly compactly generated, then so is 
its projective tensor product $X \pten Y$.
\end{abstract}

\maketitle

\section{Introduction}

In general, it is not easy to deal with the weak topology 
in projective tensor products of Banach spaces, see~\cite{avi-mar-rod-rue} for recent developments and further background 
on this topic. For instance, given two relatively weakly compact sets $W_1 \sub X$ and $W_2 \sub Y$ of the Banach spaces $X$ and~$Y$, the set
$$
	W_1 \otimes W_2:=\{x\otimes y: \, x\in W_1, \, y\in W_2\}
$$ 
might not be relatively weakly compact in the projective tensor product $X\pten Y$.
Such an example is obtained when $W_1=W_2=\{e_n:n\in \N\}$ is the usual basis of~$X=Y=\ell_2$, because in this case the sequence
$(e_n\otimes e_n)_{n\in \N}$ in $\ell_2 \pten \ell_2$ is equivalent to the usual basis of~$\ell_1$. 

In this note we discuss the property of being {\em strongly weakly compactly generated} ({\em SWCG} for short) in projective tensor products. A
Banach space~$X$ is called SWCG if there is a weakly compact set $G \sub X$ such that, 
for every weakly compact set $K \sub X$ and for every $\varepsilon>0$, there is $n\in \mathbb{N}$ such that $K \subseteq nG + \varepsilon B_X$.
This property was first considered by Schl\"{u}chtermann and Wheeler~\cite{sch-whe} and was studied further in \cite{avi-ple-rod-5,fab-mon-ziz,kam-mer2,mer-sta-2} among others. 
The basic examples of SWCG spaces are the reflexive spaces, the Schur separable spaces and $L_1(\mu)$ for any finite non-negative measure~$\mu$. All
SWCG spaces are weakly sequentially complete, \cite[Theorem~2.5]{sch-whe}. In general, the projective tensor 
product of two SWCG spaces is not SWCG, \cite[Example~2.11]{sch-whe}.
It is an open problem whether the Lebesgue-Bochner space $L_1([0,1],X)$ (which is isometrically isomorphic to $L_1[0,1] \pten X$) 
is SWCG whenever $X$ is SWCG, see \cite{rod17} and the references therein.
In the other direction, it is known that $\ell_2\pten \ell_2$ is SWCG, \cite[Example~2.3(c)]{sch-whe}. This result has been
extended to all preduals of $\sigma$-finite von Neumann algebras by de Pagter, Dodds and Sukochev, \cite[Proposition~4.3]{dep-dod-suk}, 
and to more general settings by Hamhalter, Kalenda, Peralta and Pfitzner, \cite[Theorem~9.3]{ham-alt}. 

It is natural to wonder whether $\ell_p\pten \ell_q$ is SWCG for arbitrary $1\leq p,q<\infty$. On the one hand, this is true if $p$ or~$q$ equals~$1$, because 
the $\ell_1$-sum of countably many SWCG spaces is SWCG, \cite[Proposition~2.9]{sch-whe}, and $\ell_1\pten X$ is isometrically isomorphic to~$\ell_1(X)$ 
for any Banach space~$X$. On the other hand, it is known that $\ell_p\pten \ell_q$ is 
reflexive if (and only if) $1/p+1/q<1$ (see, e.g., \cite[Corollary~4.24]{rya}). We will complete the picture by showing that $\ell_p\pten \ell_q$ is 
also SWCG in the remaining case (Corollary~\ref{cor:lplq}). This will be obtained as an immediate consequence of our main result
(see Section~\ref{section:preliminaries} for unexplained terminology):

\begin{theo}\label{theo:main}
Let $1 < p,q<\infty$ be such that $1/p+1/q\geq 1$. Let $X$ (resp.,~$Y$) be a Banach space with a countable unconditional finite-dimensional Schauder decomposition
having a disjoint lower $p$-estimate (resp.,  $q$-estimate). If $X$ and $Y$ are SWCG, then $X \pten Y$ is SWCG.
\end{theo}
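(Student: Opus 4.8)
The plan is to exhibit $X \pten Y$ as being ``absorbed'' by an increasing sequence of SWCG complemented subspaces, thereby reducing everything to a single quantitative estimate: weakly compact subsets of $X \pten Y$ have uniformly small ``corner tails'' with respect to the two Schauder decompositions. That estimate in turn rests on an $\ell_1$‑type lower bound where the condition $1/p+1/q\geq 1$ enters. Concretely, since SWCG spaces are weakly sequentially complete \cite[Theorem~2.5]{sch-whe}, neither $X$ nor $Y$ contains a copy of $c_0$, so the unconditional FDDs $(E_n)$ of $X$ and $(F_m)$ of $Y$ are boundedly complete. Write $P^X_{\leq N}$, $P^Y_{\leq N}$ for the canonical partial-sum projections (uniformly bounded, by the FDD constants), $E_{\leq N}:=\bigoplus_{n\leq N}E_n$, $F_{\leq N}:=\bigoplus_{m\leq N}F_m$, and set $C_N:=(I-P^X_{\leq N})\pten(I-P^Y_{\leq N})$, an idempotent on $X\pten Y$ because $\pten$ is multiplicative on the (commuting) coordinate projections. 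Since $Q_N:=P^X_{\leq N}\pten P^Y_{\leq N}\to I$ in the strong operator topology, $X\pten Y$ carries an FDD. Put $W_N:=(E_{\leq N}\pten Y)+(X\pten F_{\leq N})$, the range of the uniformly bounded projection $I-C_N$. As $E_{\leq N}$ and $F_{\leq N}$ are finite dimensional, $E_{\leq N}\pten Y$ and $X\pten F_{\leq N}$ are isomorphic to finite direct sums of copies of $Y$ and of $X$, hence SWCG; their intersection is $E_{\leq N}\pten F_{\leq N}$ (finite dimensional), so $W_N$ is a quotient of their direct sum by a finite-dimensional subspace, and is complemented in $X\pten Y$, hence SWCG as well. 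Fix a weakly compact $G_N\sub B_{X\pten Y}\cap W_N$ generating the weakly compact subsets of $W_N$.

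The crux is the assertion
\[
(\star)\qquad \lim_{N\to\infty}\ \sup_{u\in K}\|C_N u\|=0 \quad\text{for every weakly compact } K\sub X\pten Y .
\]
Granting $(\star)$, the theorem follows: the set $G:=\overline{\operatorname{conv}}\bigl(\bigcup_N 2^{-N}(G_N\cup(-G_N))\bigr)$ is weakly compact, since any sequence taken from $\bigcup_N 2^{-N}G_N$ either meets finitely many of the weakly compact sets $2^{-N}G_N$ or is norm-null (because $G_N\sub B_{X\pten Y}$), so the union is relatively weakly compact and the Krein--\v{S}mulian theorem applies. Given a weakly compact $K$ and $\epsilon>0$, use $(\star)$ to pick $N$ with $\sup_{u\in K}\|C_N u\|<\epsilon$; then $(I-C_N)(K)$ is a weakly compact subset of $W_N$, so $(I-C_N)(K)\sub mG_N+\epsilon B_{X\pten Y}$ for some $m\in\N$, whence $K\sub mG_N+2\epsilon B_{X\pten Y}\sub (m2^N)G+2\epsilon B_{X\pten Y}$; letting $\epsilon\to 0$ shows $X\pten Y$ is SWCG.

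The key tool for $(\star)$, and the only place where the geometric hypotheses are used, is the following lemma: if $w_k\in E_{A_k}\otimes F_{B_k}$ (supports taken with respect to the two FDDs), where $(A_k)_k$ and $(B_k)_k$ are each pairwise-disjoint families of index sets, then $(w_k)$ is equivalent to the unit vector basis of $\ell_1$. The bound $\|\sum_k a_k w_k\|_\pi\leq\sum_k|a_k|\|w_k\|$ is immediate. For the reverse, pick norm-one $\phi_k\in(E_{A_k}\pten F_{B_k})^*$ with $\phi_k(w_k)=\|w_k\|$, compose with the block projections, and form $\Psi:=\sum_k\operatorname{sgn}(a_k)\,\phi_k\circ(P^X_{A_k}\pten P^Y_{B_k})\in(X\pten Y)^*=\mathcal{B}(X,Y^*)$. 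Viewing $\Psi$ as an operator, $\Psi x$ is a sum of vectors lying in the pairwise-disjoint blocks $(F_{B_k})^*$ of $Y^*$, each of norm at most a constant times $\|P^X_{A_k}x\|$; since a disjoint lower $q$-estimate in $Y$ dualizes to a disjoint upper $q'$-estimate in $Y^*$, this yields $\|\Psi x\|_{Y^*}\lesssim(\sum_k\|P^X_{A_k}x\|^{q'})^{1/q'}$. Because $1/p+1/q\geq 1$ is exactly $q'\geq p$, the $\ell_{q'}$-norm here is dominated by the $\ell_p$-norm, and the disjoint lower $p$-estimate in $X$ (with unconditionality of its FDD) bounds $(\sum_k\|P^X_{A_k}x\|^p)^{1/p}$ by a multiple of $\|x\|$. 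Thus $\|\Psi\|$ is bounded independently of the $a_k$, while $\langle\Psi,\sum_k a_k w_k\rangle=\sum_k|a_k|\|w_k\|$, giving the $\ell_1$-lower bound.

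To deduce $(\star)$, suppose it fails for some weakly compact $K$: there are $u_j\in K$ and $N_j\to\infty$ with $\|C_{N_j}u_j\|\geq\delta>0$. By Rosenthal's $\ell_1$-theorem, after passing to a subsequence, either $(u_j)$ is equivalent to the $\ell_1$-basis — impossible, as a relatively weakly compact set contains no such sequence — or, by the Eberlein--\v{S}mulian theorem, $(u_j)$ converges weakly to some $u\in K$. Subtracting $u$, and using the identity $C_N=C_NC_M$ for $N\geq M$ (which makes $\|C_N(\cdot)\|$ monotone in $N$ up to a uniform constant, so that $z_j:=u_j-u\rightharpoonup 0$ still has $\liminf_j\|C_Nz_j\|>0$ for every $N$), one reduces to a \emph{weakly null} sequence with non-vanishing corner mass at every level. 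A Bessaga--Pe\l czy\'nski-type extraction replaces $(z_j)$, up to a norm-null perturbation, by a block sequence with respect to the FDD of $X\pten Y$; the main difficulty is to choose the levels so that the portion of each block carrying the corner mass sits on a genuine product block $E_{A_k}\otimes F_{B_k}$ whose index sets are disjoint from all earlier ones. Once this is arranged, the lemma shows the resulting sequence spans $\ell_1$ and hence cannot be weakly null, a contradiction. I expect this last gliding-hump argument — balancing the projections $P^X_{\leq N}\pten I$, $I\pten P^Y_{\leq M}$ and $C_N$ so as to isolate the genuinely ``diagonal'' corner content on doubly-disjointly-supported product blocks — to be the technical heart of the proof.
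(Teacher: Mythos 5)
Your architecture coincides with the paper's: everything is reduced to the uniform corner-tail estimate $(\star)$ (this is Theorem~\ref{theo:pten-rwc} of the paper, proved there even for conditionally weakly compact sets), and the only place where $1/p+1/q\geq 1$ enters is your ``key lemma'', which is the paper's Lemma~\ref{lem:lower-estimates} in disguise: for pairwise disjoint product blocks one has $\sum_k\|R_{A_k,B_k}(z)\|\leq d\|z\|$, proved by the same H\"{o}lder computation using $q\leq p^*$. Your derivation of the theorem from $(\star)$ --- splitting off $I-C_N$, whose range is a finite sum of complemented SWCG pieces, and assembling a single generator via Krein--\v{S}mulian --- is a sound variant of the paper's use of the three-term decomposition of Lemma~\ref{lem:decomposition} together with the Schl\"{u}chtermann--Wheeler countable-family criterion.

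The genuine gap is exactly where you announce you expect ``the technical heart'' to lie: the deduction of $(\star)$ from the key lemma. As sketched, the contradiction does not close. After the gliding hump you hold vectors $v_k=R_{A_k,B_k}(z_{j_k})$ with $\|v_k\|\geq\delta/2$ supported on pairwise disjoint product blocks; the key lemma says $(v_k)_k$ is equivalent to the $\ell_1$-basis, but $(v_k)$ is \emph{not} a small perturbation of the weakly null sequence $(z_{j_k})$ --- each $v_k$ is the image of $z_{j_k}$ under a different projection --- so ``spans $\ell_1$, hence cannot be weakly null'' applies to nothing and produces no contradiction with $z_{j_k}\rightharpoonup 0$. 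Two ways to close it: (a) form the single functional $\Psi=\sum_k\phi_k\circ R_{A_k,B_k}$, bounded by the key lemma, and show $\Psi(z_{j_k})\not\to 0$; this forces you to control the off-diagonal terms $\sum_{n\neq k}|\phi_n(R_{A_n,B_n}(z_{j_k}))|$ --- the terms with $n>k$ are small because $z_{j_k}$ is almost supported on an initial block, and the terms with $n<k$ can be made small by a further diagonal extraction since each $Z_{A_n,B_n}$ is finite-dimensional, so $R_{A_n,B_n}(z_{j_k})\to 0$ in norm for fixed $n$ --- none of which appears in your text; or (b) do what the paper does: map everything by $T(z)=(R_{A_n,B_n}(z))_{n}$ into the $\ell_1$-sum of the block spaces (bounded precisely by the key lemma) and invoke the fact that conditionally weakly compact subsets of an $\ell_1$-sum have uniformly summable tails, which contradicts $\|R_{A_k,B_k}(w_k)\|\geq\delta/2$ directly and dispenses with the Rosenthal/Eberlein--\v{S}mulian reduction altogether. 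Until one of these is actually written out, $(\star)$ --- and with it the theorem --- is not proved.
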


As another application of the previous theorem, we get that $L_p[0,1] \pten L_q[0,1]$ is SWCG whenever $1< p,q \leq 2$ (Corollary~\ref{cor:LpLq}).
From the technical point of view, our approach to Theorem~\ref{theo:main} is inspired by some results about weak compactness in~$\ell_p\pten \ell_q$
due to Hamhalter and Kalenda~\cite{ham-kal}. This also allows to get results analogous to Theorem~\ref{theo:main} for the classes of 
strongly conditionally weakly compactly generated spaces, weakly sequentially complete spaces and Schur spaces  
(Theorems~\ref{theo:SCWCG}, \ref{theo:wsc} and~\ref{theo:Schur}).

The paper is organized as follows. In Section~\ref{section:preliminaries} we introduce some terminology and preliminary facts. Section~\ref{section:main} 
is devoted to proving Theorem~\ref{theo:main} and the aforementioned related results and applications.

\section{Terminology and preliminaries}\label{section:preliminaries} 

An {\em operator} is a linear continuous map between Banach spaces.
The topological dual of a Banach space~$X$ is denoted by~$X^*$. The norm of~$X$ is denoted by $\|\cdot\|_X$ or simply~$\|\cdot\|$. 
The closed unit ball of~$X$ is $B_X=\{x\in X:\|x\|\leq 1\}$. 
Given two sets $C_1,C_2 \sub X$, its Minkowski sum is $C_1+C_2:=\{x_1+x_2:\, x_1\in C_1, \, x_2\in C_2\}$. In particular, given a
set $C \sub X$ and $d\in \N$, we write 
$$
	\sum_{i=1}^d C:=\left\{\sum_{i=1}^d x_i: \, \text{$x_i\in C$ for all $i\in \{1,\dots,d\}$}\right\}.
$$
By a {\em subspace} of~$X$ we mean a norm closed linear subspace. By a {\em projection} from~$X$ onto a subspace~$Y \sub X$
we mean an operator $P:X\to X$ such that $P(X)=Y$ and $P$ is the identity when restricted to~$Y$.
The subspace of~$X$ generated by a set~$C\sub X$ is denoted by $\overline{{\rm span}}(C)$. 
A set $D \sub X$ is said to be {\em conditionally weakly compact} (or {\em weakly precompact})
if every sequence in~$D$ admits a weakly Cauchy subsequence; this is equivalent to saying that $D$ is bounded and contains no sequence
equivalent to the usual basis of~$\ell_1$ (by Rosenthal's theorem; see, e.g., \cite[Theorem~10.2.1]{alb-kal}).

\subsection{Unconditional Schauder decompositions}\label{subsection:Schauder}

Let $X$ be a Banach space. Recall that an {\em unconditional Schauder decomposition} of~$X$ is a family~$\{X_i\}_{i\in I}$ 
of subspaces of~$X$ such that each $x\in X$ can be written in a unique way as $x=\sum_{i\in I} x_i$, where $x_i\in X_i$ for all $i\in I$
and the series is unconditionally convergent. In this case, for each $i\in I$ one has a projection $P_i$ from~$X$ onto~$X_i$
in such a way that $x=\sum_{i\in I} P_i(x)$ for all $x\in X$. 
For each $C \sub I$, we denote by $P_C$ the projection from~$X$ onto $\overline{{\rm span}}(\bigcup_{i\in C}X_i)$
given by $P_C(x):=\sum_{i\in C}P_i(x)$ for all $x\in X$. One has $\sup\{\|P_C\|:C \sub I\}<\infty$ and so the formula $|||x|||=\sup\{\|P_C(x)\|: C\sub I\}$ 
defines an equivalent norm on~$X$. Therefore, we will always assume without loss of generality that $P_C$ has norm~$1$ for every non-empty set $C\sub I$. 
The decomposition is said to be {\em finite-dimensional} if each $X_i$ is finite-dimensional. The simplest examples of unconditional finite-dimensional Schauder decompositions are those induced by unconditional Schauder bases. Countable unconditional finite-dimensional Schauder decompositions
are usually called {\em unconditional FDDs}.

The following concept has been used in~\cite{joh-phi-sch}:

\begin{defi}\label{defi:p-lower}
Let $X$ be a Banach space and $1<p<\infty$. We say that an unconditional Schauder decomposition $\{X_i\}_{i\in I}$ of~$X$ 
has a {\em disjoint lower $p$-estimate} if there is a constant $c>0$ such that,
for every finite collection $C_1,\dots,C_r$ of pairwise disjoint finite subsets of~$I$ and for every $x\in X$, we have
$$
	\left(\sum_{k=1}^r \|P_{C_k}(x)\|^p\right)^{1/p} \leq c \|x\|.
$$ 
\end{defi}

Clearly, for any $1<p<\infty$ the usual basis of $\ell_p$ has a disjoint lower $p$-estimate. 

It is easy to check that
{\em if $X$ is a Banach space having cotype~$p < \infty$, then any unconditional Schauder decomposition of~$X$ has a disjoint lower $p$-estimate.}
For instance, if $\mu$ is any non-negative measure and $1\leq r<\infty$, then $L_r(\mu)$ has cotype $p=\max\{2,r\}$
(see, e.g., \cite[Theorem~6.2.14]{alb-kal}), hence any unconditional Schauder decomposition of~$L_r(\mu)$ has 
a disjoint lower $p$-estimate. 

\subsection{Projective tensor products}\label{subsection:pten}

Let $X$ and~$Y$ be two Banach spaces. We denote by $\mathcal{B}(X,Y)$
the Banach space of all continuous bilinear functionals on $X\times Y$, equipped with the norm
$\|S\|=\sup\{|S(x,y)|:\, x \in B_X,\, y \in B_Y\}$. 
Each element of~$\mathcal{B}(X,Y)$ induces
a linear functional (denoted in the same way) in the algebraic tensor product $X\otimes Y$.
The {\em projective tensor product} of~$X$ and~$Y$, denoted by $X\pten Y$, is the completion
of $X\otimes Y$ when equipped with the norm
$$
	\|z\|=\sup\{|S(z)|: \, S\in \mathcal{B}(X,Y), \, \|S\|\leq 1\}, \quad z \in X\otimes Y.
$$
Each $S \in \mathcal{B}(X,Y)$ defines an element of $(X\pten Y)^*$ and, in fact,
this gives an isometric isomorphism between $\mathcal{B}(X,Y)$ and~$(X \pten Y)^*$
(see, e.g., \cite[Section~2.2]{rya}).

Given two operators $T:X\to X_1$ and $S:Y \to Y_1$, where $X_1$ and $Y_1$ are Banach spaces, the {\em projective tensor product}
of $T$ and $S$ is the unique operator $T\otimes S: X\pten Y \to X_1 \pten Y_1$ satisfying $(T\otimes S)(x\otimes y)=T(x)\otimes S(y)$
for every $x\in X$ and for every $y\in Y$.

We will need the following well-known elementary fact: 

\begin{lem}\label{lem:tensor-compact} 
Let $X$ and $Y$ be Banach spaces. Let $W_1 \sub X$ be a relatively norm compact set, $W_2 \sub Y$ and consider
$W_1\otimes W_2=\{x\otimes y:x\in W_1, y\in W_2\} \sub X\pten Y$.  
\begin{enumerate}
\item[(i)] If $W_2$ is relatively weakly compact, then so is $W_1 \otimes W_2$.
\item[(ii)] If $W_2$ is weakly compact and $W_1$ is norm closed, then $W_1 \otimes W_2$ is weakly compact.
\item[(iii)] If $W_2$ is conditionally weakly compact, then so is $W_1\otimes W_2$.
\end{enumerate}
\end{lem}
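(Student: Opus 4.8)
The argument will rest entirely on the isometric identification $(X \pten Y)^* = \mathcal{B}(X,Y)$ recalled above, together with the trivial estimate
$$
|S(x\otimes y)-S(x'\otimes y')| \;\le\; \|S\|\,\|x-x'\|\,\|y\| \;+\; |S(x',y)-S(x',y')|,
$$
valid for all $S\in\mathcal{B}(X,Y)$ and $x,x'\in X$, $y,y'\in Y$ (add and subtract $S(x',y)$ and use $|S(u,v)|\le\|S\|\,\|u\|\,\|v\|$); note that $y\mapsto S(x',y)$ defines an element of $Y^*$. From this I would first isolate the following auxiliary fact: \emph{if $A\sub X$ is norm compact and $B\sub Y$ is bounded, then $\Phi\colon A\times B\to X\pten Y$, $\Phi(x,y):=x\otimes y$, is continuous for the norm topology on $A$, the weak topology on $B$ and the weak topology on $X\pten Y$.} Indeed, if $\|x_\alpha-x\|\to 0$ (with $x_\alpha,x\in A$) and $y_\alpha\to y$ weakly (with $y_\alpha,y\in B$), then for each $S\in\mathcal{B}(X,Y)$ the displayed estimate applied to the pairs $(x_\alpha,y_\alpha)$ and $(x,y)$ gives $|S(x_\alpha\otimes y_\alpha)-S(x\otimes y)|\le\|S\|\,\|x_\alpha-x\|\sup_{z\in B}\|z\|+|S(x,y_\alpha)-S(x,y)|\to 0$, using boundedness of $B$ for the first term and $S(x,\cdot)\in Y^*$ for the second.

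Parts (i) and (ii) then follow immediately. In (i), let $K_1$ be the norm closure of $W_1$ (norm compact) and $K_2$ the weak closure of $W_2$ (weakly compact, hence bounded); the product $K_1\times K_2$ is compact for the (norm)$\times$(weak) topology, so $\Phi(K_1\times K_2)=K_1\otimes K_2$ is weakly compact, and it contains $W_1\otimes W_2$, which is therefore relatively weakly compact. In (ii), $W_1$ is already norm compact (being relatively norm compact and norm closed) and $W_2$ is weakly compact, so $W_1\otimes W_2=\Phi(W_1\times W_2)$ is the continuous image of a compact set, hence weakly compact.

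For (iii) I would argue sequentially. Given a sequence $(x_n\otimes y_n)_n$ with $x_n\in W_1$ and $y_n\in W_2$, pass to a subsequence so that $\|x_n-x\|\to 0$ for some $x\in X$ (by relative norm compactness of $W_1$) and $(y_n)_n$ is weakly Cauchy (by conditional weak compactness of $W_2$); both sequences are bounded. For each $S\in\mathcal{B}(X,Y)$, the displayed estimate applied to $(x_n,y_n)$ and $(x,y_n)$ gives $|S(x_n\otimes y_n)-S(x\otimes y_n)|\le\|S\|\,\|x_n-x\|\sup_k\|y_k\|\to 0$, while $S(x\otimes y_n)=S(x,y_n)$ converges because $S(x,\cdot)\in Y^*$ and $(y_n)_n$ is weakly Cauchy; hence $(S(x_n\otimes y_n))_n$ converges, i.e.\ $(x_n\otimes y_n)_n$ is weakly Cauchy, so $W_1\otimes W_2$ is conditionally weakly compact. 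I do not anticipate any genuine obstacle here; the only care required is the bookkeeping of the three topologies and keeping all nets and sequences inside bounded sets, which is automatic since weakly compact and conditionally weakly compact sets are bounded.
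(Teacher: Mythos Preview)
Your proof is correct. The paper states this lemma as a ``well-known elementary fact'' and omits its proof entirely, so there is nothing to compare against; your argument via the duality $(X\pten Y)^*=\mathcal{B}(X,Y)$ and the continuity of $(x,y)\mapsto x\otimes y$ for the (norm)$\times$(weak) topology is the standard route.
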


\section{Main results}\label{section:main}

Throughout this section $X$ and $Y$ are Banach spaces with unconditional Schauder decompositions $\{X_i\}_{i\in I}$ and $\{Y_j\}_{j\in J}$
and associated projections $\{P_i\}_{i\in I}$ and $\{Q_j\}_{j\in J}$, respectively.
As we go through this section we will add further requirements to these decompositions (see Assumptions~\ref{ref:as1} and~\ref{ref:as2} below). 

\begin{rem}\label{rem:Rdefinition}
\rm For each $C \sub I$ and for each $D\sub J$, the operator
$$
	R_{C,D}:=P_C \otimes Q_D: X\pten Y \to X\pten Y
$$ 
is a projection (which has norm~$1$ if $C$ and $D$ are non-empty)
whose range is isometrically isomorphic in the natural way to $P_C(X) \pten Q_D(Y)$ (see, e.g., \cite[Proposition~2.4]{rya}). We will write
$$
	Z_{C,D}:=R_{C,D}\big(X\pten Y\big).
$$
\end{rem}

The identity operator on a Banach space~$Z$ will be denoted by~$I_Z$.

\begin{lem}\label{lem:decomposition}
Let $C \sub I$ and $D \sub J$. Then:
$$
	I_{X \pten Y}= I_X\otimes Q_D + P_C \otimes Q_{J\setminus D}+R_{I\setminus C,J\setminus D}.
$$
\end{lem}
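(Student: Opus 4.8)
The plan is to deduce the identity from two elementary observations: that the projections associated with an unconditional Schauder decomposition split as $P_C+P_{I\setminus C}=I_X$ and $Q_D+Q_{J\setminus D}=I_Y$, and that the operator tensor product $(T,S)\mapsto T\otimes S$ is bilinear. First I would record the splitting identities. For $x\in X$ we have $x=\sum_{i\in I}P_i(x)$ with the series unconditionally convergent, and splitting the index set $I$ into $C$ and $I\setminus C$ gives $x=\sum_{i\in C}P_i(x)+\sum_{i\in I\setminus C}P_i(x)=P_C(x)+P_{I\setminus C}(x)$; hence $P_C+P_{I\setminus C}=I_X$, and symmetrically $Q_D+Q_{J\setminus D}=I_Y$. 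It is also convenient to note that $P_I=I_X$ and $Q_J=I_Y$, so that the symbol $I_X\otimes Q_D$ appearing in the statement is consistent with the notation $R_{C,D}=P_C\otimes Q_D$ of Remark~\ref{rem:Rdefinition} (it equals $R_{I,D}$), and that $I_{X\pten Y}=I_X\otimes I_Y$.

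Next I would check the instance of bilinearity that is needed: for operators $T_1,T_2\colon X\to X$ and $S\colon Y\to Y$ one has $(T_1+T_2)\otimes S=T_1\otimes S+T_2\otimes S$, and likewise $T\otimes(S_1+S_2)=T\otimes S_1+T\otimes S_2$ in the second variable. In each case both sides are bounded operators on $X\pten Y$ that agree on every elementary tensor $x\otimes y$ by the defining property of the operator tensor product; since the linear span of the elementary tensors is dense in $X\pten Y$, the two operators coincide.

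Putting the pieces together is then routine. By definition $R_{I\setminus C,J\setminus D}=P_{I\setminus C}\otimes Q_{J\setminus D}$, so bilinearity in the first variable together with the splitting of $I_X$ yields
\[
  P_C\otimes Q_{J\setminus D}+R_{I\setminus C,J\setminus D}=(P_C+P_{I\setminus C})\otimes Q_{J\setminus D}=I_X\otimes Q_{J\setminus D}.
\]
Adding $I_X\otimes Q_D$ and using bilinearity in the second variable together with the splitting of $I_Y$,
\[
  I_X\otimes Q_D+I_X\otimes Q_{J\setminus D}=I_X\otimes(Q_D+Q_{J\setminus D})=I_X\otimes I_Y=I_{X\pten Y},
\]
which is exactly the asserted decomposition.

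There is no genuinely hard step here; the only points demanding a little care are the justification that $P_C+P_{I\setminus C}=I_X$, which uses unconditional convergence to split the defining series over $I$, and the remark that additivity of $\otimes$ on operators, being valid on the dense set of elementary tensors, extends by continuity to all of $X\pten Y$.
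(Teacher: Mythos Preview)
Your proof is correct and follows exactly the same route as the paper: both arguments use the splittings $I_X=P_C+P_{I\setminus C}$ and $I_Y=Q_D+Q_{J\setminus D}$ together with bilinearity of the operator tensor product to obtain the two identities $I_{X\pten Y}=I_X\otimes Q_D+I_X\otimes Q_{J\setminus D}$ and $I_X\otimes Q_{J\setminus D}=P_C\otimes Q_{J\setminus D}+R_{I\setminus C,J\setminus D}$. You simply make explicit the easy justifications that the paper leaves implicit.
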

\begin{proof}
Observe that 
$$
	I_{X\pten Y}=I_X\otimes Q_D + I_X \otimes Q_{J\setminus D}
	\quad\mbox{and}\quad
	I_X\otimes Q_{J\setminus D}=P_C \otimes Q_{J\setminus D}+R_{I\setminus C,J\setminus D}.
$$
\end{proof}

\begin{lem}\label{lem:approx}
Let $z\in X\pten Y$, $\epsilon>0$ and let $C \sub I$ and $D \sub J$ be finite. Then there exist
finite sets $C \sub C' \sub I$ and $D \sub D' \sub J$ such that $\|z-R_{C',D'}(z)\|_{X\pten Y} \leq \epsilon$.  
\end{lem}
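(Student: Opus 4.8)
The plan is to approximate $z$ first by a finitely supported tensor and then truncate the supports of its components, using that $\{X_i\}_{i\in I}$ and $\{Y_j\}_{j\in J}$ are Schauder decompositions; the uniform bound on the projections $R_{C',D'}$ then transfers the estimate back to $z$.

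First I would use the density of the algebraic tensor product $X\otimes Y$ in $X\pten Y$ to pick $w=\sum_{k=1}^{m} x_k\otimes y_k\in X\otimes Y$ with $\|z-w\|_{X\pten Y}\leq \epsilon/3$. For any finite sets $C\sub C'\sub I$ and $D\sub D'\sub J$ the projection $R_{C',D'}$ has norm at most $1$ (see Remark~\ref{rem:Rdefinition} and the standing normalization), so by the triangle inequality
$$
\|z-R_{C',D'}(z)\|_{X\pten Y}\leq \|z-w\|_{X\pten Y}+\|w-R_{C',D'}(w)\|_{X\pten Y}+\|w-z\|_{X\pten Y},
$$
and it suffices to choose $C',D'$ making the middle term $\leq \epsilon/3$.

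To do this, for a parameter $\delta>0$ to be fixed later, I would use the unconditional convergence of $x_k=\sum_{i\in I}P_i(x_k)$ and $y_k=\sum_{j\in J}Q_j(y_k)$ to find finite sets $C_k\sub I$ and $D_k\sub J$ with $\|x_k-P_{C_k}(x_k)\|\leq\delta$ and $\|y_k-Q_{D_k}(y_k)\|\leq\delta$ for $k=1,\dots,m$, and then set $C':=C\cup\bigcup_{k}C_k$ and $D':=D\cup\bigcup_{k}D_k$. Since $I\setminus C'\sub I\setminus C_k$, we have $P_{I\setminus C'}=P_{I\setminus C'}\circ P_{I\setminus C_k}$ and hence $\|x_k-P_{C'}(x_k)\|=\|P_{I\setminus C'}(x_k)\|\leq\|P_{I\setminus C_k}(x_k)\|=\|x_k-P_{C_k}(x_k)\|\leq\delta$, and similarly $\|y_k-Q_{D'}(y_k)\|\leq\delta$. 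Writing
$$
x_k\otimes y_k-P_{C'}(x_k)\otimes Q_{D'}(y_k)=\big(x_k-P_{C'}(x_k)\big)\otimes y_k+P_{C'}(x_k)\otimes\big(y_k-Q_{D'}(y_k)\big)
$$
and using $\|a\otimes b\|_{X\pten Y}=\|a\|\,\|b\|$ together with $\|P_{C'}(x_k)\|\leq\|x_k\|$, one gets
$$
\|w-R_{C',D'}(w)\|_{X\pten Y}\leq\sum_{k=1}^{m}\Big(\|x_k-P_{C'}(x_k)\|\,\|y_k\|+\|x_k\|\,\|y_k-Q_{D'}(y_k)\|\Big)\leq\delta\sum_{k=1}^{m}\big(\|x_k\|+\|y_k\|\big).
$$

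Choosing $\delta$ so small that $\delta\sum_{k=1}^m(\|x_k\|+\|y_k\|)\leq\epsilon/3$ then finishes the proof. There is no serious obstacle here; the only point that genuinely uses the standing normalization of the decompositions is the inequality $\|x_k-P_{C'}(x_k)\|\leq\|x_k-P_{C_k}(x_k)\|$, which lets me enlarge each $C_k$ to the common finite set $C'$ (and similarly each $D_k$ to $D'$) without spoiling the approximation.
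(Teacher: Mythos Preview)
Your proof is correct and follows essentially the same approach as the paper's: approximate $z$ by a finite sum of elementary tensors, then enlarge $C$ and $D$ to capture the finitely many components well enough, using the splitting $x\otimes y-P_{C'}(x)\otimes Q_{D'}(y)=(x-P_{C'}(x))\otimes y+P_{C'}(x)\otimes(y-Q_{D'}(y))$. The only cosmetic difference is that the paper obtains the finite-sum approximation via the series representation $z=\sum_{n}x_n\otimes y_n$ with $\sum_n\|x_n\|\|y_n\|<\infty$ and a tail bound, whereas you appeal directly to density of $X\otimes Y$ and to $\|R_{C',D'}\|\le 1$ to transfer the estimate back to~$z$.
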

\begin{proof}
Write $z=\sum_{n\in \N}x_n\otimes y_n$ for some sequences $(x_n)_{n\in \N}$ and $(y_n)_{n\in \N}$ in $X$ and~$Y$, respectively,
with $\sum_{n\in \N}\|x_n\|\|y_n\|<\infty$
(see, e.g., \cite[Proposition~2.8]{rya}).
Choose $N\in \N$ large enough such that 
\begin{equation}\label{eqn:star}
	\sum_{n>N}\|x_n\|\|y_n\|\leq \frac{\epsilon}{3}.
\end{equation}
Since $x=\sum_{i\in I}P_i(x)$ for all $x\in X$ and $y=\sum_{j\in J}Q_j(y)$ for all $y\in Y$, the series being unconditionally convergent,
we can take finite sets $C \sub C' \sub I$ and $D \sub D' \sub J$ in such a way that
$$
	\|x_n-P_{C'}(x_n)\| \|y_n\| \leq \frac{\epsilon}{6N}
	\quad \mbox{and}\quad
	\|y_n-P_{D'}(y_n)\| \|x_n\| \leq \frac{\epsilon}{6N}
$$ 
for every $n\in \{1,\dots,N\}$. Then
\begin{multline*}
	\big\|x_n\otimes y_n -P_{C'}(x_n)\otimes P_{D'}(y_n)\big\|_{X\pten Y} \\ \leq
	\big\|(x_n-P_{C'}(x_n))\otimes y_n\big\|_{X\pten Y} + \big\|P_{C'}(x_n)\otimes (y_n-P_{D'}(y_n))\big\|_{X\pten Y} \leq \frac{\epsilon}{3N}
\end{multline*}
for every $n\in \{1,\dots,N\}$, which together with~\eqref{eqn:star} allows us to conclude that
\begin{multline*}
	\|z-R_{C',D'}(z)\|_{X\pten Y}
	\leq \sum_{n=1}^N \big\|x_n\otimes y_n -P_{C'}(x_n)\otimes P_{D'}(y_n)\big\|_{X\pten Y} \\ +
	\left\|\sum_{n>N}x_n\otimes y_n\right\|_{X\pten Y} 
	+	\left\|R_{C',D'}\left(\sum_{n>N}x_n\otimes y_n\right)\right\|_{X\pten Y} \leq \epsilon,
\end{multline*}
as required.
\end{proof}

\begin{assumption}\label{ref:as1}
\rm Throughout the rest of this section we suppose that $\{X_i\}_{i\in I}$ 
has a disjoint lower $p$-estimate and that $\{Y_j\}_{j\in J}$ has a disjoint lower $q$-estimate, where $1< p,q<\infty$ satisfy $1/p+1/q\geq 1$.
\end{assumption}

\begin{lem}\label{lem:lower-estimates}
There is a constant $d>0$ such that, if
$(A_n)_{n\in\N}$ and $(B_n)_{n\in \N}$ are sequences of pairwise disjoint finite subsets of~$I$ and~$J$, respectively, then
$$
	\sum_{n\in\N}\left\|R_{A_n,B_n}(z)\right\|_{X\pten Y}\leq d\|z\|_{X\pten Y}
	\quad
	\text{for all $z\in X\pten Y$}.
$$ 
\end{lem}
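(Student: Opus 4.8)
The plan is to reduce the estimate to a scalar inequality on the "block coefficients" of $z$, exploiting the fact that the bilinear functionals built from a fixed unit vector of $(P_{A_n}(X))^*$ and a fixed unit vector of $(Q_{B_n}(Y))^*$, composed with the projections, have controlled joint norm. Fix $z\in X\pten Y$ and the disjoint blocks $(A_n)$, $(B_n)$. For each $n$ put $z_n:=R_{A_n,B_n}(z)\in Z_{A_n,B_n}$ and set $a_n:=\|z_n\|_{X\pten Y}$. Since $Z_{A_n,B_n}$ is (isometrically) $P_{A_n}(X)\pten Q_{B_n}(Y)$, we may pick, by the Hahn--Banach theorem, a norm-one bilinear functional $S_n\in\mathcal B(P_{A_n}(X),Q_{B_n}(Y))$ with $S_n(z_n)\ge a_n$. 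Composing with the projections gives $\widetilde S_n(x,y):=S_n(P_{A_n}x,Q_{B_n}y)\in\mathcal B(X,Y)$, and one checks $\widetilde S_n(z)=S_n(z_n)$ because $R_{A_m,B_m}z$ is annihilated by $\widetilde S_n$ for $m\ne n$ (the supports are disjoint in at least one coordinate) and $\widetilde S_n$ kills all the "mixed" parts.

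The heart of the matter is then: for any signs (or any scalars of modulus one) $\epsilon_n$, the functional $S:=\sum_{n} \epsilon_n\, t_n\, \widetilde S_n$ — for a suitable choice of nonnegative weights $t_n$ with $(\sum t_n^{p'})^{1/p'}$ or a mixed-norm bound under control — should have $\|S\|_{\mathcal B(X,Y)}$ bounded by an absolute constant times $\max\{\|x\|:\ \}$-type quantities, by the disjoint lower $p$- and $q$-estimates. Concretely: given $x\in B_X$ and $y\in B_Y$, the vectors $P_{A_n}(x)$ satisfy $\big(\sum_n\|P_{A_n}(x)\|^p\big)^{1/p}\le c\|x\|$ by Assumption~\ref{ref:as1} (applied to the pairwise disjoint finite sets $A_n$, via a limiting argument over finitely many of them), and likewise $\big(\sum_n\|Q_{B_n}(y)\|^q\big)^{1/q}\le c'\|y\|$. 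Hence
$$
	\Big|\sum_n \epsilon_n t_n \widetilde S_n(x,y)\Big|
	\le \sum_n t_n\,\|P_{A_n}(x)\|\,\|Q_{B_n}(y)\|
	\le \Big(\sum_n \|P_{A_n}(x)\|^p\Big)^{1/p}\Big(\sum_n \|Q_{B_n}(y)\|^q\Big)^{1/q}\,\|(t_n)\|_r
$$
where $1/r=1-1/p-1/q\ge 0$, i.e.\ $r=\infty$ when $1/p+1/q=1$ and $r$ finite otherwise; in either case, choosing $t_n$ with $\|(t_n)\|_r\le 1$ keeps $\|S\|_{\mathcal B(X,Y)}\le cc'$. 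Applying such $S$ to $z$ and using $S(z)=\sum_n \epsilon_n t_n a_n$, we get $\sum_n t_n a_n \le cc'\|z\|$ for every admissible weight sequence $(t_n)$, and taking the supremum over $\|(t_n)\|_r\le 1$ yields $\big(\sum_n a_n^{r'}\big)^{1/r'}\le cc'\|z\|$ with $1/r'=1/p+1/q\ge 1$. Thus $r'\le 1$ formally; since $\ell_{r'}\hookrightarrow\ell_1$ with norm one when $r'\le 1$ (monotonicity of $\ell_s$-norms), we conclude $\sum_n a_n\le cc'\|z\|$, which is the claim with $d=cc'$ (and when $1/p+1/q=1$ exactly, $r'=1$ and the bound is immediate).

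The main obstacle I anticipate is making the duality argument rigorous when the index set is infinite: one has to justify that the series $\sum_n \epsilon_n t_n\widetilde S_n$ converges to an element of $\mathcal B(X,Y)$ of the stated norm, and that its action on $z$ equals $\sum_n\epsilon_n t_n a_n$. The clean way is to work with finitely many blocks $n\le N$ at a time — there the functional is a genuine finite sum, the norm bound via the disjoint lower $p$/$q$-estimates is clean (only finitely many disjoint $A_n$, $B_n$ are involved, so Assumption~\ref{ref:as1} applies verbatim), and one gets $\sum_{n\le N} a_n\le d\|z\|$ uniformly in $N$; then let $N\to\infty$. A secondary point is checking $\widetilde S_n(R_{A_m,B_m}z)=0$ for $m\ne n$ and $\widetilde S_n\big((I-\sum_k R_{A_k,B_k})z\big)=0$, which follows from Lemma~\ref{lem:decomposition} applied twice together with $S_n(P_{A_n}x\otimes Q_{B_n}y)$ depending only on $R_{A_n,B_n}$-coordinates; this is routine once set up carefully.
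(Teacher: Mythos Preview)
Your core strategy---choose norming functionals $S_n$ on each $Z_{A_n,B_n}$, lift them to $\widetilde S_n=S_n\circ R_{A_n,B_n}\in\mathcal B(X,Y)$, sum them, and bound the bilinear norm of the sum using the disjoint lower estimates---is exactly the paper's approach, and your plan to work with $n\le N$ and let $N\to\infty$ matches the paper as well. Also, the ``secondary point'' you flag is a non-issue: since $\widetilde S_n=S_n\circ R_{A_n,B_n}$ by definition, $\widetilde S_n(z)=S_n(z_n)=a_n$ holds immediately, without any decomposition of~$z$.

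There is, however, a genuine error in the H\"older step. You write $1/r=1-1/p-1/q\ge 0$, but Assumption~\ref{ref:as1} says $1/p+1/q\ge 1$, so in fact $1/r\le 0$. When $1/p+1/q>1$ strictly there is \emph{no} exponent $r\in[1,\infty]$ making the three-term H\"older inequality valid, and the subsequent duality ``$\sup_{\|t\|_r\le 1}\sum t_n a_n=\|a\|_{r'}$'' with $r'<1$ is not a Banach-space duality. So the weighted detour breaks down precisely in the interesting range.

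The fix is to drop the weights $t_n$ altogether and argue as the paper does: with $t_n\equiv 1$ one has, for $x\in B_X$, $y\in B_Y$,
\[
\Big|\sum_{n\le N}\widetilde S_n(x,y)\Big|
\le \sum_{n\le N}\|P_{A_n}x\|\,\|Q_{B_n}y\|
\le \Big(\sum_{n\le N}\|P_{A_n}x\|^{p}\Big)^{1/p}\Big(\sum_{n\le N}\|Q_{B_n}y\|^{p^*}\Big)^{1/p^*},
\]
by ordinary two-term H\"older with the conjugate pair $(p,p^*)$. Since $1/p+1/q\ge 1$ forces $q\le p^*$, the monotonicity of $\ell_s$-norms gives $\big(\sum\|Q_{B_n}y\|^{p^*}\big)^{1/p^*}\le\big(\sum\|Q_{B_n}y\|^{q}\big)^{1/q}$, and now both factors are controlled by the disjoint lower estimates. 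This yields $\|\sum_{n\le N}\widetilde S_n\|\le c_Xc_Y$ and hence $\sum_{n\le N}a_n\le c_Xc_Y\|z\|$ directly, with no need for the $r$, $r'$ bookkeeping.
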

\begin{proof}
Fix $z\in X\pten Y$ and $N\in \N$. We will check that
\begin{equation}\label{eqn:lower-estimates-N}
	\sum_{n=1}^N\left\|R_{A_n,B_n}(z)\right\|\leq c_X c_Y \|z\|_{X\pten Y},
\end{equation}
where $c_X$ and $c_Y$ are constants as in Definition~\ref{defi:p-lower} for $X$ and~$Y$, respectively.

For each $n\in \{1,\dots,N\}$ we take $S_n\in (Z_{A_n,B_n})^*$ with $\|S_n\|\leq 1$
such that 
$$
	S_n\big(R_{A_n,B_n}(z)\big)=\|R_{A_n,B_n}(z)\|_{X\pten Y}.
$$ 
Define $S\in (X\pten Y)^*$ by $S:=\sum_{n=1}^N S_n \circ R_{A_n,B_n}$.
Clearly, we have
$$
	\sum_{n=1}^N\left\|R_{A_n,B_n}(z)\right\|_{X\pten Y}=\sum_{n=1}^N S_n\big(R_{A_n,B_n}(z)\big)=
	S(z) \leq \|S\| \|z\|_{X\pten Y}.
$$
Therefore, in order to prove~\eqref{eqn:lower-estimates-N} it suffices to show that $\|S\|\leq c_X c_Y$. 
We identify $(X\pten Y)^*$ and $\mathcal{B}(X,Y)$ (see Subsection~\ref{subsection:pten}). Fix $x\in X$ and $y\in Y$.
Now, let $1<p^*<\infty$ be the conjugate of~$p$, that is, $1/p+1/p^*=1$. 
By H\"{o}lder's inequality, the fact that $q\leq p^*$ and the disjoint lower $p$-estimate (resp., $q$-estimate) of~$\{X_i\}_{i\in I}$ (resp.,~$\{Y_j\}_{j\in J}$), we have  
\begin{eqnarray*}
	|S(x,y)| & = & \left|\sum_{n=1}^N S_n\big(P_{A_n}(x) \otimes Q_{B_n}(y)\big)\right| \\ 
	& \leq & \sum_{n=1}^N \big|S_n\big(P_{A_n}(x) \otimes Q_{B_n}(y)\big)\big| \\
	& \leq & \sum_{n=1}^N \big\|P_{A_n}(x)\big\| \big\|Q_{B_n}(y)\big\| \\
	& \leq & \left(\sum_{n=1}^N \big\|P_{A_n}(x)\big\|^p\right)^{1/p} \left(\sum_{n=1}^N \big\|Q_{B_n}(y)\big\|^{p^*}\right)^{1/p^*} \\
	& \leq & \left(\sum_{n=1}^N \big\|P_{A_n}(x)\big\|^p\right)^{1/p} \left(\sum_{n=1}^N \big\|Q_{B_n}(y)\big\|^{q}\right)^{1/q} \\
	& \leq & c_X c_Y \|x\| \|y\|.
\end{eqnarray*}
This shows that $\|S\|\leq c_X c_Y$ and the proof is finished.
\end{proof}

The following result is based on some ideas of the proof of \cite[Theorem~2.1]{ham-kal} and will be a key tool for us.

\begin{theo}\label{theo:pten-rwc}
If $W \sub X\pten Y$ is conditionally weakly compact,
then for every $\epsilon >0$ there exist finite sets $C \sub I$ and $D \sub J$ such that
$$
	\|R_{I\setminus C,J\setminus D}(z)\|_{X\pten Y}\leq \epsilon
	\quad\text{for all $z\in W$.}
$$
\end{theo}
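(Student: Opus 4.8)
The plan is to argue by contradiction: assuming the conclusion fails for some $\epsilon>0$ — so that $\sup_{z\in W}\|R_{I\setminus C,J\setminus D}(z)\|>\epsilon$ for all finite $C\sub I$, $D\sub J$ — I will extract from $W$ a weakly null sequence of differences of its members that is equivalent to the usual $\ell_1$-basis, which is impossible since $W$ (hence $W-W$) is conditionally weakly compact, by Rosenthal's theorem as recalled in Section~\ref{section:preliminaries}.

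First I would construct recursively vectors $z_k\in W$ and increasing finite sets $\emptyset=C_0\sub C_1\sub\cdots\sub I$, $\emptyset=D_0\sub D_1\sub\cdots\sub J$: given $C_{k-1},D_{k-1}$, use the contradiction hypothesis to pick $z_k\in W$ with $\|R_{I\setminus C_{k-1},J\setminus D_{k-1}}(z_k)\|>\epsilon$, then use Lemma~\ref{lem:approx} to pick finite $C_{k-1}\sub C_k$, $D_{k-1}\sub D_k$ with $\|z_k-R_{C_k,D_k}(z_k)\|<\delta_k$, the $\delta_k>0$ being fixed in advance, small relative to $\epsilon$ and to the constant $d$ of Lemma~\ref{lem:lower-estimates}. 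Put $A_k:=C_k\setminus C_{k-1}$ and $B_k:=D_k\setminus D_{k-1}$, so $(A_k)_k$ and $(B_k)_k$ are sequences of pairwise disjoint finite sets; since $P_{I\setminus C_{k-1}}P_{C_k}=P_{A_k}$ and $Q_{J\setminus D_{k-1}}Q_{D_k}=Q_{B_k}$, one has $R_{I\setminus C_{k-1},J\setminus D_{k-1}}\circ R_{C_k,D_k}=R_{A_k,B_k}$, whence $\|R_{A_k,B_k}(z_k)\|>\epsilon-\delta_k$, while for $j<k$ the disjointness $A_k\cap C_j=\emptyset$ gives $R_{A_k,B_k}(z_j)=R_{A_k,B_k}\big(z_j-R_{C_j,D_j}(z_j)\big)$, so that $\sum_{k>j}\|R_{A_k,B_k}(z_j)\|\leq d\,\delta_j$ by Lemma~\ref{lem:lower-estimates} (applied to the disjoint families indexed by $\{k:k>j\}$). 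Finally choose $S_k\in(Z_{A_k,B_k})^*$ with $\|S_k\|\leq1$ and $S_k(R_{A_k,B_k}(z_k))=\|R_{A_k,B_k}(z_k)\|$, and set $\phi_k:=S_k\circ R_{A_k,B_k}\in(X\pten Y)^*$; then $\|\phi_k\|\leq1$ and, again by Lemma~\ref{lem:lower-estimates}, $\big\|\sum_{k\in E}\eta_k\phi_k\big\|\leq d$ whenever the $A_k$, $k\in E$, are pairwise disjoint and $|\eta_k|\leq1$.

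By Rosenthal's theorem $(z_k)$ has a subsequence that is either equivalent to the $\ell_1$-basis — in which case we are already done, as it lies in $W$ — or weakly Cauchy, so I assume $(z_k)$ itself is weakly Cauchy; then $\gamma_m:=\lim_n\phi_m(z_n)$ exists for each $m$. Next I would pick indices $n_1<n_2<\cdots$ greedily so that $|\phi_{n_r}(z_{n_s})-\gamma_{n_r}|$ is as small as desired whenever $r<s$ (possible since $\phi_m(z_n)\to\gamma_m$ for fixed $m$), and put $\zeta_l:=z_{n_{2l}}-z_{n_{2l-1}}$. Being a difference sequence of the weakly Cauchy $(z_k)$, $(\zeta_l)$ is weakly null, hence not equivalent to the $\ell_1$-basis; so it suffices to show that it \emph{is}. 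The upper estimate $\|\sum_l a_l\zeta_l\|\leq 2\sup_k\|z_k\|\cdot\sum_l|a_l|$ is clear. For the lower one, given scalars $(a_l)$ take $\eta_l$ with $|\eta_l|\leq1$ and $\eta_la_l=|a_l|$, set $\Phi:=\sum_l\eta_l\phi_{n_{2l}}$ (so $\|\Phi\|\leq d$), and expand $\Phi\big(\sum_la_l\zeta_l\big)=\sum_l|a_l|\,\phi_{n_{2l}}(\zeta_l)+\sum_{l\neq l''}a_l\eta_{l''}\,\phi_{n_{2l''}}(\zeta_l)$. The diagonal terms are $\geq\epsilon-\delta_{n_{2l}}-\delta_{n_{2l-1}}$ (using $|\phi_{n_{2l}}(z_{n_{2l-1}})|\leq\|R_{A_{n_{2l}},B_{n_{2l}}}(z_{n_{2l-1}})\|\leq\delta_{n_{2l-1}}$, valid since $A_{n_{2l}}\cap C_{n_{2l-1}}=\emptyset$). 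Among the off-diagonal terms, those with $l''>l$ are absorbed by the $\delta_k$'s (forwardness plus Lemma~\ref{lem:lower-estimates}), and those with $l''<l$ are made small by the greedy choice, since $\phi_{n_{2l''}}(\zeta_l)=\big(\phi_{n_{2l''}}(z_{n_{2l}})-\gamma_{n_{2l''}}\big)-\big(\phi_{n_{2l''}}(z_{n_{2l-1}})-\gamma_{n_{2l''}}\big)$. Choosing the $\delta_k$'s and the greedy slack small enough, $\Phi(\sum_la_l\zeta_l)\geq\tfrac{\epsilon}{4}\sum_l|a_l|$, whence $\|\sum_la_l\zeta_l\|\geq\tfrac{\epsilon}{4d}\sum_l|a_l|$, so $(\zeta_l)$ is $\ell_1$-equivalent — the contradiction sought.

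The crux, as this outline shows, is the control of the ``backward'' interactions $\phi_{n_{2l''}}(\zeta_l)$ with $l''<l$: there is no reason for a later vector $z_n$ to be small on an earlier block $A_m\times B_m$, i.e.\ for $R_{A_m,B_m}(z_n)$ to be small, so $(z_k)$ itself need not behave like an $\ell_1$-basis with respect to these blocks. Passing to a weakly Cauchy subsequence forces $\phi_m(z_n)$ to stabilize in $n$, and differencing the even- and odd-indexed terms then cancels the stabilized part; using $\zeta_l=z_{n_{2l}}-z_{n_{2l-1}}$ rather than consecutive differences is what keeps distinct $\zeta_l$'s from sharing a block. Everything else is driven by the disjointness of the blocks and by Lemma~\ref{lem:lower-estimates}, which is precisely where the disjoint lower $p$- and $q$-estimates together with the hypothesis $1/p+1/q\geq1$ are used.
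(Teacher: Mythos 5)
Your argument is correct, and its first half --- the inductive gliding-hump construction of $z_k\in W$ and increasing finite sets $C_k,D_k$ via Lemma~\ref{lem:approx}, the pairwise disjoint blocks $A_k,B_k$, and the two consequences $\|R_{A_k,B_k}(z_k)\|>\epsilon-\delta_k$ and $\sum_{k>j}\|R_{A_k,B_k}(z_j)\|\le d\,\delta_j$ drawn from Lemma~\ref{lem:lower-estimates} --- is exactly the paper's construction. Where you genuinely diverge is in how the contradiction is reached. The paper packages the blocks into a single operator $T:X\pten Y\to U$, $T(z)=(R_{A_n,B_n}(z))_{n}$, where $U$ is the $\ell_1$-sum of the spaces $Z_{A_n,B_n}$ (well defined thanks to Lemma~\ref{lem:lower-estimates}), notes that $T(W)$ is conditionally weakly compact, and invokes the folk characterization of conditionally weakly compact subsets of $\ell_1$-sums (uniformly small tails, \cite[Lemma~3.18]{rod20}) to contradict $\|R_{A_n,B_n}(w_n)\|>b-a$ for all $n$. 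You instead re-prove, inline and in the special case needed, precisely that fact: you pass to a weakly Cauchy subsequence (legitimate --- indeed you do not even need Rosenthal's dichotomy there, since conditional weak compactness of $W$ directly provides it, and all relations you use are of the form ``$j<k$'', so they survive the passage), form the differences $\zeta_l=z_{n_{2l}}-z_{n_{2l-1}}$, and show with the functionals $\phi_k=S_k\circ R_{A_k,B_k}$ and Lemma~\ref{lem:lower-estimates} that $(\zeta_l)$ would be equivalent to the $\ell_1$-basis while being weakly null. Your bookkeeping is the right one: the forward interactions must be handled by the summed bound $d\,\delta_j$ (termwise bounds would not be uniform in the length of the linear combination), and the backward ones by the stabilized limits $\gamma_m$ together with a greedy slack decaying fast enough to absorb the roughly $l$ terms with $l''<l$; with, say, geometric slack and $\delta_k\le\epsilon/(16(d+1))$ one gets $\Phi\big(\sum_l a_l\zeta_l\big)\ge\tfrac{\epsilon}{4}\sum_l|a_l|$ and a lower $\ell_1$-constant $\epsilon/(4d)$, so the details you leave as ``small enough'' do check out. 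What the paper's route buys is brevity, by outsourcing the combinatorial core to the cited lemma on $\ell_1$-sums; what yours buys is a self-contained proof that does not depend on \cite{rod20}.
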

\begin{proof}
By contradiction, suppose that
$$
	c:=\inf\left\{\sup_{z\in W} \|R_{I\setminus C,J\setminus D}(z)\|_{X\pten Y}: \, \text{$C \sub I$ and $D \sub J$ are finite}\right\}>0.
$$
Fix $0<a<b<c$. With the help of Lemma~\ref{lem:approx}, we can construct inductively two sequences of finite sets
$$
	\emptyset=C_1 \sub C_2 \sub \dots \sub I,
	\quad\qquad
	\emptyset=D_1 \sub D_2 \sub \dots \sub J,
$$
and a sequence $(w_n)_{n\in \N}$ in~$W$ such that
\begin{equation}\label{eqn:big}
	\left\|R_{I \setminus C_n,J\setminus D_n}(w_n)\right\| > b
\end{equation}
and
\begin{equation}\label{eqn:small}
	\left\|w_n-R_{C_{n+1},D_{n+1}}(w_n)\right\| < a
\end{equation}
for every $n\in \N$. Define
$$
	A_n:=C_{n+1}\setminus C_n
	\quad\mbox{and}\quad 
	B_n:=D_{n+1}\setminus D_n
	\quad\text{for all $n\in \N$},
$$
so that $(A_n)_{n\in\N}$ and $(B_n)_{n\in \N}$ are sequences of pairwise disjoint finite subsets of~$I$ and~$J$, respectively.
For each $n\in \N$, the vector 
$$
	z_n:=R_{A_n,B_n}(w_n)
$$
satisfies
\begin{multline*}
	\left\|R_{I\setminus C_n,J\setminus D_n}(w_n)-z_n\right\|_{X\pten Y} \\ =  
	\left\|R_{I\setminus C_n,J\setminus D_n}(w_n)-R_{I\setminus C_n,J\setminus D_n}(R_{C_{n+1},D_{n+1}}(w_n))\right\|_{X\pten Y} \\ 
	 \leq \left\|R_{I\setminus C_n,J\setminus D_n}\right\|
	\left\|w_n-R_{C_{n+1},D_{n+1}}(w_n)\right\|_{X\pten Y}
	\stackrel{\eqref{eqn:small}}{<}a
\end{multline*}
and so
\begin{equation}\label{eqn:zn}
	\|z_n\|_{X\pten Y} \geq \left\|R_{I\setminus C_n,J\setminus D_n}(w_n)\right\|_{X\pten Y}-\left\|R_{I\setminus C_n,J\setminus D_n}(w_n)-z_n\right\|_{X\pten Y}
	\stackrel{\eqref{eqn:big}}{>} b-a.
\end{equation}

Let $U$ be the $\ell_1$-sum of the
sequence of Banach spaces $(Z_{A_n,B_n})_{n\in \N}$. Observe that Lemma~\ref{lem:lower-estimates} allows us to define an operator
$$
	T:X\pten Y \to U
$$
by the formula
$$
	T(z):=\left(R_{A_n,B_n}(z)\right)_{n\in \N}
	\quad
	\text{for all $z\in X\pten Y$}.
$$
Since $W$ is conditionally weakly compact, the same holds for $T(W)$
and then we can apply a folk characterization of conditionally weakly compact sets in $\ell_1$-sums of Banach spaces (see, e.g., \cite[Lemma~3.18]{rod20})
to find $N\in \N$ such that
$$
	\sup_{w\in W} \sum_{n>N} \left\|R_{A_n,B_n}(w)\right\|_{X\pten Y} \leq b-a.
$$
Therefore, we have
$$
	\left\|R_{A_n,B_n}(w)\right\|_{X\pten Y} \leq b-a
	\quad\text{for every $w\in W$ and for every $n>N$,}
$$
which contradicts~\eqref{eqn:zn}. The proof is finished.
\end{proof}

\begin{assumption}\label{ref:as2}
\rm Throughout the rest of the paper we assume further that 
all $X_i$'s and $Y_j$'s are finite-dimensional.
\end{assumption}

\subsection{Theorem~\ref{theo:main} and some applications}

It is convenient to introduce the following terminology:

\begin{defi}\label{defi:StronglyGenerated}  
Let $Z$ be a Banach space and let $\mathcal{C}$ and $\mathcal{G}$ be two families of subsets of~$Z$. We say that
{\em $\mathcal{C}$ is strongly generated by~$\mathcal{G}$} if for every $C \in \mathcal{C}$ and for every $\varepsilon>0$ there is 
$G \in \mathcal{G}$ such that $C \subseteq G + \varepsilon B_Z$.
\end{defi}

Thus, a Banach space~$Z$ is SWCG if and only if the family of all weakly compact subsets of~$Z$ is strongly generated by $\{nG:n\in \N\}$ for some
weakly compact set $G \sub Z$ (in this case, we say that $Z$ is SWCG by~$G$). It is known that $Z$ is SWCG 
if and only if the family of all weakly compact subsets of~$Z$ is strongly generated by a {\em countable} subfamily, \cite[Theorem~2.1]{sch-whe}.

As we already mentioned in the introduction, the projective tensor product of a finite-dimensional Banach space
and a SWCG Banach space is SWCG. The next lemma describes ``strongly generating'' sets for such spaces.

\begin{lem}\label{lem:strong-generation-fd}
Let $U$ be a finite-dimensional Banach space with $d={\rm dim}(U)$ and let $V$ be a Banach space which is SWCG by the 
weakly compact set~$G \sub V$. 
Then $U\pten V$ is SWCG by the weakly compact set $G':=\sum_{i=1}^d B_U\otimes G$.
\end{lem}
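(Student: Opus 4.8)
The plan is to reduce the weakly compact set $K \subseteq U \pten V$ to something controlled by the generating set $G$ of $V$ via the finite basis of $U$. First I would fix a basis $u_1,\dots,u_d$ of $U$, which gives a linear isomorphism $U \cong \mathbb{R}^d$; the coordinate functionals are bounded, so there is a constant $M$ with $\|u\| \leq M \max_i |c_i|$ whenever $u = \sum_i c_i u_i$ and, conversely, $|c_i| \leq M'\|u\|$ for each $i$. Correspondingly one has the canonical isometric identification $U \pten V \cong \bigoplus_{i=1}^d V$ (as vector spaces, with equivalent norms), since every $z \in U \pten V$ can be written uniquely as $z = \sum_{i=1}^d u_i \otimes v_i$ for some $v_i \in V$; write $v_i = \pi_i(z)$, where $\pi_i : U \pten V \to V$ is $(\,\cdot\,) \otimes u_i^* $ composed appropriately, i.e. $\pi_i = u_i^* \otimes I_V$ viewed as an operator $U \pten V \to \mathbb{R}\pten V \cong V$. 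Each $\pi_i$ is a (bounded) operator.

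Next I would use that $K$ is weakly compact in $U \pten V$ to deduce that each $\pi_i(K)$ is weakly compact in $V$: operators are weak-to-weak continuous, so $\pi_i(K)$ is a weakly compact subset of~$V$. Since $V$ is SWCG by~$G$, for a given $\varepsilon > 0$ there is $n \in \N$ with $\pi_i(K) \subseteq nG + \varepsilon' B_V$ for every $i \in \{1,\dots,d\}$ (take the maximum of the finitely many integers $n_i$), where $\varepsilon'$ is a small quantity to be fixed in terms of $\varepsilon$, $d$, and $M$. Now for $z \in K$ write $z = \sum_{i=1}^d u_i \otimes v_i$ with $v_i = \pi_i(z) \in nG + \varepsilon' B_V$, say $v_i = n g_i + \varepsilon' b_i$ with $g_i \in G$, $\|b_i\|\leq 1$. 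Then $u_i \otimes v_i = n\, u_i \otimes g_i + \varepsilon'\, u_i \otimes b_i$. Since $u_i \in B_U$ up to rescaling (replace $u_i$ by $u_i/\|u_i\|$ and absorb the scalar, or just note $\|u_i\| \leq$ some constant so $u_i \in \lambda B_U$), the term $n\, u_i \otimes g_i$ lies in $n\lambda\, (B_U \otimes G) \subseteq n\lambda\, G'$ and summing over $i$ gives an element of $nd\lambda\, G' \subseteq (nd\lambda)G'$ — here I use that $G' = \sum_{i=1}^d B_U \otimes G$ is convex and balanced enough, or more simply that $\sum_{i=1}^d B_U \otimes G \subseteq d \cdot \mathrm{conv}(B_U\otimes G)$-type estimates; the cleanest route is that $\sum_{i=1}^d (B_U \otimes G) = G'$ by definition, so $n\sum_i u_i' \otimes g_i \in nG'$ directly once each $u_i' \in B_U$. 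The leftover term $\varepsilon' \sum_i u_i \otimes b_i$ has norm at most $\varepsilon' \sum_i \|u_i\| \leq \varepsilon' d \lambda$, which is $\leq \varepsilon$ provided $\varepsilon'$ was chosen small enough. Hence $K \subseteq nG' + \varepsilon B_{U\pten V}$ with $G'$ weakly compact (it is weakly compact by Lemma~\ref{lem:tensor-compact}(ii), applied to the norm compact set $B_U$ — finite-dimensional — and the weakly compact set $G$, together with a finite Minkowski sum of weakly compact sets being weakly compact), so $U \pten V$ is SWCG by~$G'$.

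The main technical care, rather than obstacle, is bookkeeping the constants: the identification $U \pten V \cong \bigoplus V$ is only an \emph{isomorphism}, not an isometry, in the coordinate direction $\pi_i$, so the $\varepsilon$ in "$K \subseteq nG' + \varepsilon B_{U \pten V}$" must be produced by choosing the intermediate $\varepsilon'$ small relative to the norm of the finitely many bounded operators $\pi_i$ and the basis $u_i$; and one must confirm that $G' = \sum_{i=1}^d B_U \otimes G$ is genuinely weakly compact, for which the only real input is Lemma~\ref{lem:tensor-compact}(ii) (norm compactness of $B_U$ since $\dim U < \infty$) plus the stability of weak compactness under finite sums and under scaling. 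Everything else is routine.
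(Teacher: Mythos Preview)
Your proposal is correct and follows essentially the same route as the paper: decompose $z\in U\pten V$ along a basis of~$U$ into $d$ coordinate projections in~$V$, apply the SWCG hypothesis of~$V$ to each of the (weakly compact) projected sets, and reassemble inside $nG'+\varepsilon B_{U\pten V}$. The paper avoids your constant-tracking by first renorming $U$ to~$\ell_1^d$, so that the identification $U\pten V\cong V\oplus_1\dots\oplus_1 V$ becomes an isometry and the standard basis vectors already lie in~$B_U$; your version with an arbitrary normalized basis works just as well, at the cost of the extra bookkeeping you flag.
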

\begin{proof}
Observe that $G'$ is weakly compact in $U\pten V$, since it is the Minkowski sum of finitely many
copies of the weakly compact set $B_U\otimes G$ (apply Lemma~\ref{lem:tensor-compact}).
By renorming, we can assume that $U=\ell_1^d$. There is an isometric isomorphism 
$$
	J:U\pten V \to W:=\underbrace{V \oplus_1 \dots \oplus_1 V}_{\text{$d$ times}}
$$
satisfying 
$$
	J\big((a_k)_{k=1}^d \otimes v\big)=(a_1v,\dots,a_dv)
	\quad
\text{for every $(a_k)_{k=1}^d \in U$ and for every $v\in V$}
$$
(see, e.g., \cite[Example~2.6]{rya}). We will show that the 
family of all weakly compact subsets of~$W$ is strongly generated by $\{nJ(G'):n\in \N\}$.
Indeed,
let $K \sub W$ be a weakly compact set and fix $\epsilon>0$. For each $k\in \{1,\dots,d\}$,
let $\pi_k:W \to V$ be the $k$th-coordinate projection, so that $\pi_k(K)$ is a weakly compact subset of~$V$ and
we can choose $m_k\in \N$ such that 
\begin{equation}\label{eqn:pij}
	\pi_k(K) \sub m_k G +\frac{\epsilon}{d}B_V.
\end{equation}
Pick $m\in \N$ with $m \geq m_k$ for every $k\in \{1,\dots,d\}$.
Let $\{e_1,\dots,e_d\}$ be the usual basis of~$U=\ell_1^d$.
Then 
\begin{eqnarray*}
	K  &\stackrel{\eqref{eqn:pij}}{\sub}& \big\{(m_1v_1, \dots,m_d v_d): \, v_k \in G \text{ for all $k\in \{1,\dots,d\}$}\big\} + \epsilon B_W \\
	 &= &\left\{\sum_{k=1}^d J(m_k e_k \otimes v_i) : \, v_k \in G \text{ for all $k\in \{1,\dots,d\}$}\right\} + \epsilon B_W \\
	&\sub& m \sum_{k=1}^d J(B_U \otimes G) + \epsilon B_W=mJ(G')+\epsilon B_W.
\end{eqnarray*}
The proof is complete.
\end{proof}

We are now ready to prove our main result:

\begin{proof}[Proof of Theorem~\ref{theo:main}]
By assumption, both $I$ and $J$ are countable, all $X_i$'s and $Y_j$'s are finite-dimensional and
$X$ (resp.,~$Y$) is SWCG by some weakly compact 
set $G_X \sub X$ (resp., $G_Y \sub Y$). For any finite sets $C \sub I$ and $D \sub J$ and any $n,m\in \N$, the set
$$
	H_{C,D,n,m}:= n\left(\sum_{k=1}^{{\rm dim}(P_C(X))} B_{P_C(X)} \otimes G_Y\right) + m\left(\sum_{k=1}^{{\rm dim}(Q_D(Y))} G_X\otimes B_{Q_D(Y)}\right)
$$ 
is weakly compact in~$X\pten Y$ (apply Lemma~\ref{lem:tensor-compact} and the fact that the Minkowski sum of finitely
many weakly compact sets is weakly compact). 

In order to prove that $X\pten Y$ is SWCG we will check that the (countable) family of all sets of the form $H_{C,D,n,m}$ 
strongly generates the family of all weakly compact subsets of~$X\pten Y$.
Let $W \sub X\pten Y$ be a weakly compact set and fix $\epsilon>0$. 
By Theorem~\ref{theo:pten-rwc}, there exist finite sets $C \sub I$ and $D \sub J$ such that
\begin{equation}\label{eqn:remainder}
	R_{I\setminus C,J\setminus D}(W) \sub \frac{\epsilon}{3} B_{X\pten Y}. 
\end{equation}
Then $W':=(I_X\otimes Q_{J\setminus D})(W)$
is weakly compact and, by Lemma~\ref{lem:decomposition}, we have
\begin{equation}\label{eqn:biginclu}
	W \sub (I_X\otimes Q_D)(W) + (P_C\otimes I_Y)(W')+R_{I\setminus C,J\setminus D}(W).
\end{equation}
Since $(I_X\otimes Q_D)(W)$ and $(P_C\otimes I_Y)(W')$ are weakly compact subsets of $Z_{I,D}$ and $Z_{C,J}$, respectively, 
Lemma~\ref{lem:strong-generation-fd} ensures the existence of $n,m\in \N$ such that
$$
	(I_X\otimes Q_D)(W) \sub m \left(\sum_{k=1}^{{\rm dim}(Q_D(Y))} G_X\otimes B_{Q_D(Y)}\right)+\frac{\epsilon}{3}B_{X\pten Y}
$$
and 
$$
	(P_C\otimes I_Y)(W') \sub n \left(\sum_{k=1}^{{\rm dim}(P_C(X))} B_{P_C(X)} \otimes G_Y\right)+\frac{\epsilon}{3}B_{X\pten Y}.
$$
These inclusions, \eqref{eqn:remainder} and~\eqref{eqn:biginclu} imply that 
$W \sub H_{C,D,n,m}+\epsilon B_{X\pten Y}$.
\end{proof}

By putting together Theorem~\ref{theo:main} and the information already collected in the introduction and Subsection~\ref{subsection:Schauder},
we get the following corollaries. 

\begin{cor}\label{cor:lplq}
The space $\ell_{p_1}\pten \ell_{p_2}$ is SWCG for any $1\leq p_1,p_2<\infty$.
\end{cor}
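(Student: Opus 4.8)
The plan is to split into cases according to whether the pair $(p_1,p_2)$ lands in the region covered by Theorem~\ref{theo:main}, the reflexive region, or involves the index~$1$. First I would dispose of the case $1/p_1+1/p_2<1$: here $\ell_{p_1}\pten\ell_{p_2}$ is reflexive (see, e.g., \cite[Corollary~4.24]{rya}), hence SWCG, since every reflexive space is SWCG (its closed unit ball strongly generates all weakly compact sets). Next, if $p_1=1$ or $p_2=1$ — say $p_1=1$ — then $\ell_1\pten\ell_{p_2}$ is isometrically isomorphic to $\ell_1(\ell_{p_2})$, the $\ell_1$-sum of countably many copies of the reflexive (hence SWCG) space $\ell_{p_2}$, and the $\ell_1$-sum of countably many SWCG spaces is SWCG by \cite[Proposition~2.9]{sch-whe}; the case $p_2=1$ is symmetric.

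In the remaining case $1<p_1,p_2<\infty$ with $1/p_1+1/p_2\geq 1$, I would simply invoke Theorem~\ref{theo:main} with $X=\ell_{p_1}$, $Y=\ell_{p_2}$, $p=p_1$, $q=p_2$. The hypotheses are readily checked: the usual unit vector basis of $\ell_{p_i}$ is a countable unconditional finite-dimensional Schauder decomposition (each $X_i$ being one-dimensional), it has a disjoint lower $p_i$-estimate with constant~$1$ (as noted just after Definition~\ref{defi:p-lower}, the usual basis of $\ell_p$ has a disjoint lower $p$-estimate), and $\ell_{p_i}$ is reflexive, hence SWCG. Therefore $\ell_{p_1}\pten\ell_{p_2}$ is SWCG.

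There is no real obstacle here: the corollary is a bookkeeping exercise assembling Theorem~\ref{theo:main} with the standard facts recalled in the introduction (reflexive spaces and Schur separable spaces are SWCG; $\ell_1\pten X\cong\ell_1(X)$; $\ell_1$-sums of SWCG spaces are SWCG; $\ell_{p_1}\pten\ell_{p_2}$ is reflexive exactly when $1/p_1+1/p_2<1$). The only mild point to be careful about is that the case division by the three regions — $1/p_1+1/p_2<1$, $1/p_1+1/p_2\geq 1$ with both indices~$>1$, and at least one index equal to~$1$ — is exhaustive over all $1\leq p_1,p_2<\infty$, which it plainly is.
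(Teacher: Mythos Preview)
Your proposal is correct and matches the paper's approach exactly: the corollary is stated without a separate proof, the paper simply noting that it follows by combining Theorem~\ref{theo:main} with the facts already recalled in the introduction (the reflexive case $1/p_1+1/p_2<1$, the $\ell_1$-sum argument for $p_1=1$ or $p_2=1$, and the usual basis of~$\ell_p$ having a disjoint lower $p$-estimate). Your case split and the verifications you give are precisely what the paper has in mind.
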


Bearing in mind that the space $L_r[0,1]$ has an unconditional basis if $1<r<\infty$ (see, e.g., \cite[Theorem~6.1.6]{alb-kal}), we also have:

\begin{cor}\label{cor:LpLq}
The space $L_{p_1}[0,1] \pten L_{p_2}[0,1]$ is SWCG for any $1<p_1, p_2 \leq 2$.
\end{cor}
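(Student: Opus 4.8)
\textbf{Proof proposal for Corollary~\ref{cor:LpLq}.}

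The plan is to deduce this directly from Theorem~\ref{theo:main} by checking that $X=L_{p_1}[0,1]$ and $Y=L_{p_2}[0,1]$ satisfy all of its hypotheses for suitable exponents. First I would fix $1<p_1,p_2\leq 2$ and set $p=p_2^*$ (the conjugate of $p_2$) and $q=p_1^*$; since $p_2\leq 2$ we have $p=p_2^*\geq 2>1$, and likewise $q\geq 2>1$, while $1/p+1/q=(1-1/p_2)+(1-1/p_1)=2-(1/p_1+1/p_2)\geq 2-1=1$ because $p_1,p_2\leq 2$. So the arithmetic constraint $1<p,q<\infty$ with $1/p+1/q\geq 1$ of Theorem~\ref{theo:main} holds. (Of course one could also just take $p=q=2$, noting $L_r[0,1]$ has cotype $2$ for $1\leq r\leq 2$; I would use whichever choice makes the cotype step cleanest, and in fact $p=q=2$ is the simplest.)

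Next I would supply the Schauder decomposition. By \cite[Theorem~6.1.6]{alb-kal}, $L_{r}[0,1]$ has an unconditional basis whenever $1<r<\infty$, hence in particular a countable unconditional finite-dimensional Schauder decomposition (an unconditional FDD), namely the one induced by that basis. Then I would invoke the remark in Subsection~\ref{subsection:Schauder}: a Banach space with cotype $p<\infty$ has the property that \emph{every} unconditional Schauder decomposition of it has a disjoint lower $p$-estimate. Since $L_r[0,1]$ has cotype $\max\{2,r\}=2$ for $1<r\leq 2$, the unconditional FDD of $L_{p_1}[0,1]$ has a disjoint lower $2$-estimate and that of $L_{p_2}[0,1]$ has a disjoint lower $2$-estimate; with $p=q=2$ the hypotheses on the decompositions are met.

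Finally I would record that $L_{p_1}[0,1]$ and $L_{p_2}[0,1]$ are SWCG. For $1<r\leq 2$ the space $L_r[0,1]$ is reflexive, and reflexive spaces are SWCG (as noted in the introduction, via the closed unit ball as generator). Having verified every hypothesis of Theorem~\ref{theo:main} with $X=L_{p_1}[0,1]$, $Y=L_{p_2}[0,1]$ and $p=q=2$, the theorem yields that $X\pten Y=L_{p_1}[0,1]\pten L_{p_2}[0,1]$ is SWCG, which is the assertion. There is essentially no obstacle here: the whole content of the corollary is in Theorem~\ref{theo:main}, and the work is the bookkeeping of exponents plus citing the unconditional-basis and cotype facts for $L_r[0,1]$; the only point deserving a moment's care is confirming the inequality $1/p+1/q\geq 1$ for the chosen $p,q$, which, as computed above, reduces to $p_1,p_2\leq 2$.
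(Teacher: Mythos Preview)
Your proposal is correct and follows essentially the same route as the paper: invoke the unconditional basis of $L_r[0,1]$ for $1<r<\infty$, use that $L_r[0,1]$ has cotype~$2$ when $1<r\leq 2$ so that any unconditional decomposition has a disjoint lower $2$-estimate, note reflexivity gives SWCG, and apply Theorem~\ref{theo:main} with $p=q=2$. The initial detour through $p=p_2^*$, $q=p_1^*$ is harmless but unnecessary; the choice $p=q=2$ that you ultimately settle on is exactly what the paper intends.
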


The previous statement is also true for $p_1=1$ and any $1\leq p_2 <\infty$, because
the space $L_1([0,1],Z)$ is SWCG whenever $Z$ is a reflexive Banach space or $Z=L_1(\mu)$ for a finite non-negative measure~$\mu$,
see \cite[Section~3]{sch-whe}.

\begin{cor}\label{cor:Lplq}
The space $L_{p_1}[0,1]\pten \ell_{p_2}$ is SWCG for any $1< p_1,p_2<\infty$ satisfying $1/\max\{2,p_1\}+1/p_2\geq 1$.
\end{cor}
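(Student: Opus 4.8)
The plan is to deduce Corollary~\ref{cor:Lplq} directly from Theorem~\ref{theo:main}, exactly in the style of the proofs of Corollaries~\ref{cor:lplq} and~\ref{cor:LpLq}. First I would set $p_1'=\max\{2,p_1\}$ and observe that the hypothesis $1/p_1'+1/p_2\geq 1$ together with $p_1'\geq 2$ forces $p_2\leq 2$, so $p_2'=\max\{2,p_2\}=2$ and a fortiori $1/p_1'+1/p_2'=1/p_1'+1/2\geq 1$ holds as well; in particular both $p_1'$ and $p_2'$ lie in $(1,\infty)$ (indeed in $[2,\infty)$), so the numerical constraint of Theorem~\ref{theo:main} is met with the pair $(p_1',p_2')$.

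Next I would supply the two required unconditional FDDs with the appropriate disjoint lower estimates. For $\ell_{p_2}$: its usual basis is an unconditional (finite-dimensional) Schauder decomposition into one-dimensional subspaces, and since $1<p_2\leq 2$ the space $\ell_{p_2}$ has cotype $2=p_2'$ (see, e.g., \cite[Theorem~6.2.14]{alb-kal}), so by the remark in Subsection~\ref{subsection:Schauder} this decomposition has a disjoint lower $p_2'$-estimate. For $L_{p_1}[0,1]$ with $1<p_1<\infty$: by \cite[Theorem~6.1.6]{alb-kal} it has an unconditional basis, which again is a countable unconditional finite-dimensional Schauder decomposition; and $L_{p_1}[0,1]$ has cotype $p_1'=\max\{2,p_1\}$ (\cite[Theorem~6.2.14]{alb-kal}), so this decomposition has a disjoint lower $p_1'$-estimate. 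Finally, $L_{p_1}[0,1]$ is reflexive and $\ell_{p_2}$ is reflexive, hence both are SWCG (reflexive spaces are SWCG, as noted in the introduction).

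With $X=L_{p_1}[0,1]$, $Y=\ell_{p_2}$, and the exponents $p=p_1'$, $q=p_2'$, all hypotheses of Theorem~\ref{theo:main} are in place, so $X\pten Y=L_{p_1}[0,1]\pten\ell_{p_2}$ is SWCG, which is the claim. I expect there to be no real obstacle here: the only point requiring a moment's care is the purely arithmetic observation that $1/\max\{2,p_1\}+1/p_2\geq 1$ automatically yields $p_2\leq 2$ (and symmetrically one could note it yields $p_1\leq\infty$ trivially), so that $\max\{2,p_2\}=2$ and the exponent condition $1/p+1/q\geq1$ of Theorem~\ref{theo:main} indeed holds for $(p,q)=(\max\{2,p_1\},\max\{2,p_2\})$. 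Everything else is a direct citation of results already recorded in the excerpt.
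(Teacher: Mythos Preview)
Your argument contains a genuine arithmetic slip that breaks the proof whenever $p_1>2$. From $1/p_1'+1/p_2\geq 1$ and $p_2'\,{=}\,2\geq p_2$ you conclude ``a fortiori $1/p_1'+1/p_2'=1/p_1'+1/2\geq 1$'', but the inequality goes the wrong way: replacing $p_2$ by the \emph{larger} number $2$ makes $1/p_2'$ \emph{smaller}, so you only get $1/p_1'+1/2\leq 1/p_1'+1/p_2$. Concretely, take $p_1=4$, $p_2=4/3$: then $1/\max\{2,p_1\}+1/p_2=1/4+3/4=1$, so the corollary applies, yet with your exponents $(p,q)=(4,2)$ one has $1/4+1/2=3/4<1$ and Theorem~\ref{theo:main} does not kick in.

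The fix is immediate and is exactly the paper's intended route: do not pass through cotype for $\ell_{p_2}$. As noted in Subsection~\ref{subsection:Schauder}, the usual basis of $\ell_{p_2}$ already has a disjoint lower $p_2$-estimate. Take $q=p_2$ (not $q=2$); then the exponent condition $1/p+1/q\geq 1$ in Theorem~\ref{theo:main} becomes precisely $1/\max\{2,p_1\}+1/p_2\geq 1$, the hypothesis of the corollary. Your handling of the $L_{p_1}[0,1]$ factor (unconditional basis, cotype $\max\{2,p_1\}$, hence disjoint lower $\max\{2,p_1\}$-estimate) and the observation that both factors are reflexive, hence SWCG, are correct and unchanged.
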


As a consequence of the results of~\cite{sch-whe}, the previous statement is also true for $p_1=1$ and any $1\leq p_2 <\infty$ or vice versa.

\subsection{Further results}

Following~\cite{kun-sch}, a Banach space $Z$ is said to be {\em strongly conditionally weakly compactly generated} ({\em SCWCG} for short) 
if there is a conditionally weakly compact set $G \sub Z$ such that, 
for every conditionally weakly compact set $C \sub X$ and for every $\varepsilon>0$, there is $n\in \mathbb{N}$ such that $C \subseteq nG + \varepsilon B_Z$.
It turns out that a Banach space is SWCG if and only if it is SCWCG and weakly sequentially complete, \cite[Theorem~2.2]{laj-rod-2}.
The proof of Theorem~\ref{theo:main} can be adapted straightforwardly to get the following result:

\begin{theo}\label{theo:SCWCG}
Suppose that $I$ and $J$ are countable. If $X$ and $Y$ are SCWCG, then
$X\pten Y$ is SCWCG.
\end{theo}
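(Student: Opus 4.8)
The plan is to mimic the proof of Theorem~\ref{theo:main} line by line, replacing ``weakly compact'' by ``conditionally weakly compact'' throughout, and checking that each ingredient used there has a conditionally-weakly-compact analogue. The structural backbone of the argument (Lemmas~\ref{lem:decomposition} and~\ref{lem:approx}, Assumption~\ref{ref:as1} and hence Theorem~\ref{theo:pten-rwc}) is already stated for conditionally weakly compact sets, so no changes are needed there; the only places that genuinely refer to weak compactness are Lemma~\ref{lem:tensor-compact} and Lemma~\ref{lem:strong-generation-fd}, together with the countability reduction for the generating family.

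First I would record the conditionally-weakly-compact version of Lemma~\ref{lem:strong-generation-fd}: if $U$ is finite-dimensional with $d=\dim(U)$ and $V$ is SCWCG by a conditionally weakly compact set $G\sub V$, then $U\pten V$ is SCWCG by $G':=\sum_{i=1}^d B_U\otimes G$. This set is conditionally weakly compact by Lemma~\ref{lem:tensor-compact}(iii) (applied to each summand $B_U\otimes G$, using that $B_U$ is norm compact) together with the elementary fact that a finite Minkowski sum of conditionally weakly compact sets is conditionally weakly compact. The proof is then verbatim that of Lemma~\ref{lem:strong-generation-fd}: pass through the isometric isomorphism $J:U\pten V\to V\oplus_1\cdots\oplus_1 V$, note that each coordinate projection $\pi_k$ sends a conditionally weakly compact set to a conditionally weakly compact set (coordinate projections are operators, and operators preserve conditional weak compactness by Rosenthal's theorem), choose $m_k$ with $\pi_k(C)\sub m_k G+\frac{\epsilon}{d}B_V$, and assemble exactly as before.

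Next I would run the proof of Theorem~\ref{theo:main} with this replacement. Given a conditionally weakly compact $W\sub X\pten Y$ and $\epsilon>0$, Theorem~\ref{theo:pten-rwc} supplies finite $C\sub I$, $D\sub J$ with $R_{I\setminus C,J\setminus D}(W)\sub\frac{\epsilon}{3}B_{X\pten Y}$; the set $W':=(I_X\otimes Q_{J\setminus D})(W)$ is conditionally weakly compact (image of $W$ under an operator), the decomposition inclusion from Lemma~\ref{lem:decomposition} is unchanged, and $(I_X\otimes Q_D)(W)$, $(P_C\otimes I_Y)(W')$ are conditionally weakly compact subsets of $Z_{I,D}$ and $Z_{C,J}$. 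Applying the conditionally-weakly-compact version of Lemma~\ref{lem:strong-generation-fd} to these two (noting $Z_{I,D}\cong X\pten Q_D(Y)$ and $Z_{C,J}\cong P_C(X)\pten Y$, with the finite-dimensional factor on the appropriate side — here one uses Assumption~\ref{ref:as2}) yields $n,m\in\N$ with the two approximate inclusions, and the sets $H_{C,D,n,m}$ are now conditionally weakly compact. The countable family $\{H_{C,D,n,m}:C,D \text{ finite}, n,m\in\N\}$ then strongly generates all conditionally weakly compact subsets of $X\pten Y$; finally, for a single generating conditionally weakly compact set one invokes the SCWCG analogue of \cite[Theorem~2.1]{sch-whe} (countably many generators suffice $\Rightarrow$ one generator, valid for SCWCG as well — e.g.\ index the countable family as $(G_k)_{k\in\N}$ and take the conditionally weakly compact set $G:=\{\sum_k g_k/(2^k\|\!\cdot\!\|)\ldots\}$, or simply cite that the SCWCG notion enjoys the same reduction).

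I do not expect a serious obstacle here; the content is entirely a matter of verifying that ``operators preserve conditional weak compactness'' and ``finite Minkowski sums preserve conditional weak compactness'' — both immediate from Rosenthal's $\ell_1$-theorem — and that the finite-dimensional-side hypothesis is available from Assumption~\ref{ref:as2}. The one point deserving a word of care is the passage from a countable strongly generating family to a single conditionally weakly compact generator: this is where one should either cite the SCWCG version of the Schl\"uchtermann--Wheeler reduction explicitly or observe that, the family $(H_{C,D,n,m})$ being countable and increasing in $n,m$, one may re-index it as an increasing sequence $(G_k)$ of conditionally weakly compact sets and then take $G:=\overline{\mathrm{conv}}(\bigcup_k 2^{-k}G_k)$, which is conditionally weakly compact and does the job. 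Because everything reduces to these routine checks, it is legitimate to say that ``the proof of Theorem~\ref{theo:main} can be adapted straightforwardly'', and a full write-up would consist mainly of the phrase ``replace `weakly compact' by `conditionally weakly compact' and `SWCG' by `SCWCG' throughout'' together with the two preservation remarks above.
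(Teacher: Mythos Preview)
Your proposal is correct and is exactly the adaptation the paper has in mind; the paper gives no proof beyond the sentence ``the proof of Theorem~\ref{theo:main} can be adapted straightforwardly'', and you have spelled out precisely that adaptation, correctly identifying that Theorem~\ref{theo:pten-rwc} is already stated for conditionally weakly compact sets and that the only substitutions needed are Lemma~\ref{lem:tensor-compact}(iii) in place of~(i)--(ii) and the SCWCG analogue of Lemma~\ref{lem:strong-generation-fd}. The one spot to tighten in a formal write-up is the passage from a countable strongly generating family to a single conditionally weakly compact generator: your suggested set $G=\overline{\mathrm{conv}}\big(\bigcup_k 2^{-k}(1+\sup_{g\in G_k}\|g\|)^{-1}G_k\big)$ works, but it relies on the (true, but worth citing) fact that the closed convex hull of a conditionally weakly compact set is again conditionally weakly compact; alternatively, the countable-family characterization of SCWCG is available in \cite{kun-sch}.
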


Pisier, \cite[Section~4]{pis4}, showed that, in general, the projective tensor product of two weakly sequentially complete Banach spaces is not weakly sequentially complete
(that example is the one used in~\cite[Example~2.11]{sch-whe} for the property of being SWCG).
As to positive results, it is known that the projective tensor product of two weakly sequentially complete Banach spaces is
weakly sequentially complete provided that one of them has an unconditional basis, thanks to a result of Lewis, \cite[Corollary~10]{lew5}. Talagrand proved that the space
$L_1([0,1],Z)$ is weakly sequentially complete whenever $Z$ is a weakly sequentially complete Banach space, \cite[Theorem~11]{tal11}.
We can give another positive result along this way:

\begin{theo}\label{theo:wsc}
If $X$ and $Y$ are weakly sequentially complete, then so is $X\pten Y$.
\end{theo}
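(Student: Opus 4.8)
The plan is to use the classical fact that a Banach space $Z$ is weakly sequentially complete if and only if every conditionally weakly compact subset of~$Z$ is relatively weakly compact (one implication is the Eberlein--\v{S}mulian theorem, the other follows from Rosenthal's $\ell_1$ theorem together with the observation that a weakly Cauchy sequence is conditionally weakly compact). Accordingly, I fix a conditionally weakly compact set $W\sub X\pten Y$ and aim to show it is relatively weakly compact; this is precisely the situation in which Theorem~\ref{theo:pten-rwc} applies.

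So fix $\epsilon>0$ and use Theorem~\ref{theo:pten-rwc} to choose finite sets $C\sub I$ and $D\sub J$ with $\|R_{I\setminus C,J\setminus D}(z)\|_{X\pten Y}\le\epsilon$ for all $z\in W$. By the identity decomposition of Lemma~\ref{lem:decomposition}, every $z\in W$ is written as
$$
	z=(I_X\otimes Q_D)(z)+(P_C\otimes Q_{J\setminus D})(z)+R_{I\setminus C,J\setminus D}(z),
$$
so that $W\sub W_1+W_2+\epsilon B_{X\pten Y}$, where $W_1:=(I_X\otimes Q_D)(W)$ and $W_2:=(P_C\otimes Q_{J\setminus D})(W)$. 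Being continuous linear images of the conditionally weakly compact set~$W$, the sets $W_1$ and $W_2$ are conditionally weakly compact, and they lie in the subspaces $Z_{I,D}$ and $Z_{C,J\setminus D}$, which by Remark~\ref{rem:Rdefinition} are isometrically isomorphic to $X\pten Q_D(Y)$ and $P_C(X)\pten Q_{J\setminus D}(Y)$, respectively.

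The next point is that these two spaces are themselves weakly sequentially complete. By Assumption~\ref{ref:as2} the factors $Q_D(Y)$ and $P_C(X)$ are finite-dimensional, so after renorming one of them to be $\ell_1^d$ and applying \cite[Example~2.6]{rya}, the space $X\pten Q_D(Y)$ becomes isomorphic to a finite $\ell_1$-sum of copies of~$X$, and $P_C(X)\pten Q_{J\setminus D}(Y)$ to a finite $\ell_1$-sum of copies of the complemented subspace $Q_{J\setminus D}(Y)$ of~$Y$. Since subspaces of weakly sequentially complete spaces, and finite $\ell_1$-sums of weakly sequentially complete spaces, are again weakly sequentially complete, the spaces $Z_{I,D}$ and $Z_{C,J\setminus D}$ are weakly sequentially complete; hence, by the characterization recalled above, $W_1$ and $W_2$ are relatively weakly compact in those subspaces and therefore in $X\pten Y$. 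Consequently $K:=\overline{W_1+W_2}^{\,w}$ is weakly compact and $W\sub K+\epsilon B_{X\pten Y}$. As $\epsilon>0$ was arbitrary and $W$ is bounded, Grothendieck's criterion for relative weak compactness --- a bounded set that admits, for each $\epsilon>0$, a weakly compact $\epsilon$-approximation is relatively weakly compact, which is checked by a short bidual argument --- yields that $W$ is relatively weakly compact, completing the proof.

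I expect no serious obstacle here: granted Theorem~\ref{theo:pten-rwc}, the scheme merely repeats the bookkeeping of the proof of Theorem~\ref{theo:main}. The only genuinely substantive steps are (i) recognising that the ``boundary'' summands $W_1$ and $W_2$ lie in weakly sequentially complete subspaces --- which relies on a finite-dimensional factor turning $\pten$ into a finite $\ell_1$-sum --- and (ii) invoking Grothendieck's approximation criterion to upgrade ``$\epsilon$-close to a weakly compact set for every $\epsilon$'' to relative weak compactness. As in Theorem~\ref{theo:SCWCG}, countability of $I$ and~$J$ is not actually needed in this argument.
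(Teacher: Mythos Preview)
Your proof is correct and follows essentially the same route as the paper's: reduce to showing conditionally weakly compact sets are relatively weakly compact, apply Theorem~\ref{theo:pten-rwc} and Lemma~\ref{lem:decomposition} to trap $W$ within $\epsilon$ of a sum of two images lying in $Z_{I,D}$ and $Z_{C,J}$ (you use the slightly smaller $Z_{C,J\setminus D}$, which is equally valid), observe those subspaces are weakly sequentially complete because one tensor factor is finite-dimensional, and conclude via Grothendieck's approximation criterion. The only cosmetic difference is that you spell out the finite-dimensional step via the $\ell_1^d$-sum identification, whereas the paper simply asserts that a projective tensor product with a finite-dimensional space inherits weak sequential completeness.
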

\begin{proof} We will show that every conditionally weakly compact set $W \sub X\pten Y$
is relatively weakly compact. Fix $\epsilon>0$. By Theorem~\ref{theo:pten-rwc} there exist
finite sets $C \sub I$ and $D \sub J$ such that $R_{I\setminus C,J\setminus D}(W) \sub \epsilon B_{X\pten Y}$. 
By Lemma~\ref{lem:decomposition}, we have
$$
	W \sub (I_X\otimes Q_D)(W) + (P_C\otimes Q_{J\setminus D})(W) + \epsilon B_{X\pten Y}.
$$
Since $Z_{I,D}$ and $Z_{C,J}$ are weakly sequentially complete (because
the projective tensor product of a weakly sequentially complete space and a finite-dimensional Banach space is weakly sequentially complete), 
both $(I_X\otimes Q_D)(W)$ and $(P_C\otimes Q_{J\setminus D})(W)$ are relatively weakly compact. Therefore,
$H:=(I_X\otimes Q_D)(W) + (P_C\otimes Q_{J\setminus D})(W)$ is a relatively weakly compact subset of~$X\pten Y$ satisfying
$W \sub H+\epsilon B_{X\pten Y}$. As $\epsilon>0$ is arbitrary, we conclude that $W$ is relatively weakly compact in $X\pten Y$
(see, e.g., \cite[Lemma~13.32]{fab-ultimo}).
\end{proof}

\begin{exa}\label{exa:lplq-uncountable}
\rm Let $\Gamma$ be an uncountable set and let $1<p_1,p_2<\infty$ be such that $1/p_1+1/p_2\geq 1$. 
Then $Z:=\ell_{p_1}(\Gamma)\pten \ell_{p_2}(\Gamma)$ is not SCWCG. Indeed, this space is not weakly compactly generated
because it contains a subspace isomorphic to~$\ell_1(\Gamma)$ (see, e.g., \cite[Proposition~3.6]{avi-mar-rod-rue}). Therefore,
$Z$ is not SWCG. However, it is weakly sequentially complete, because
every sequence in~$Z$ is contained in a subspace isomorphic to $\ell_{p_1}\pten \ell_{p_2}$, which is weakly sequentially complete
by Lewis' aforementioned result. From \cite[Theorem~2.2]{laj-rod-2} we conclude that $Z$ cannot be SCWCG.
\end{exa}

It seems to be an open problem whether the Schur property is preserved by projective tensor products, \cite{bot-rue,gon-gut}. 
The argument of Theorem~\ref{theo:wsc} can be imitated to prove that every conditionally weakly compact subset of~$X\pten Y$ is relatively norm compact
provided that $X$ and $Y$ have the Schur property, so one gets:
 
\begin{theo}\label{theo:Schur}
If $X$ and $Y$ have the Schur property, then so does $X\pten Y$.
\end{theo}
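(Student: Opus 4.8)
The plan is to imitate the proof of Theorem~\ref{theo:wsc} almost verbatim, replacing ``weakly sequentially complete'' by ``has the Schur property'' and ``relatively weakly compact'' by ``relatively norm compact'' throughout. First I would record the (standard) fact that a Banach space $Z$ has the Schur property if and only if every conditionally weakly compact subset of~$Z$ is relatively norm compact. For the forward implication, a weakly Cauchy sequence $(z_n)$ in a Schur space is norm Cauchy: since the scalar sequences $(f(z_n))$ converge for every $f$, the differences $z_{m_k}-z_{n_k}$ are weakly null for any index sequences $m_k,n_k\to\infty$, hence norm null by the Schur property, so $(z_n)$ is norm Cauchy and therefore norm convergent; thus every sequence in a conditionally weakly compact set has a norm convergent subsequence, i.e.\ such a set is relatively norm compact. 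For the converse, a weakly convergent sequence is conditionally weakly compact, hence has a norm convergent subsequence whose limit must be the weak limit, and the usual ``subsequence of a subsequence'' argument upgrades this to norm convergence of the whole sequence. Consequently it suffices to show that every conditionally weakly compact set $W\sub X\pten Y$ is relatively norm compact.

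So I would fix such a $W$ and $\epsilon>0$. By Theorem~\ref{theo:pten-rwc} there are finite sets $C\sub I$ and $D\sub J$ with $R_{I\setminus C,J\setminus D}(W)\sub \epsilon B_{X\pten Y}$, and Lemma~\ref{lem:decomposition} yields
$$
	W\sub (I_X\otimes Q_D)(W)+(P_C\otimes Q_{J\setminus D})(W)+\epsilon B_{X\pten Y}.
$$
The sets $(I_X\otimes Q_D)(W)\sub Z_{I,D}$ and $(P_C\otimes Q_{J\setminus D})(W)\sub Z_{C,J}$ are conditionally weakly compact, being continuous images of~$W$. Now $Z_{I,D}$ is isometrically isomorphic to $X\pten Q_D(Y)$ and $Z_{C,J}$ to $P_C(X)\pten Y$ (Remark~\ref{rem:Rdefinition}); since $C$ and $D$ are finite and the decompositions are finite-dimensional (Assumption~\ref{ref:as2}), $Q_D(Y)$ and $P_C(X)$ are finite-dimensional, and the projective tensor product of a Schur space with a finite-dimensional space has the Schur property (up to isomorphism it is a finite $\ell_1$-sum of copies of the Schur space, exactly as in Lemma~\ref{lem:strong-generation-fd}). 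Hence, by the characterization above, both $(I_X\otimes Q_D)(W)$ and $(P_C\otimes Q_{J\setminus D})(W)$ are relatively norm compact, and so is their Minkowski sum $H$, which satisfies $W\sub H+\epsilon B_{X\pten Y}$.

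Finally, since $\epsilon>0$ was arbitrary, covering $\overline{H}$ by finitely many $\epsilon$-balls shows that $W$ is totally bounded, and as $X\pten Y$ is complete, $W$ is relatively norm compact; this finishes the argument. I do not expect a genuine obstacle here: all the substance is already contained in Theorem~\ref{theo:pten-rwc}, and the proof is essentially a transcription of that of Theorem~\ref{theo:wsc}. The only structural difference to keep in mind is that relative norm compactness --- unlike relative weak compactness --- is inherited from $\epsilon$-approximations by plain total boundedness, which is why completeness of $X\pten Y$ takes the place of the Grothendieck-type lemma used there; the mildly delicate (but routine) points are the two equivalences for the Schur property stated above and the remark that $V\pten F$ has the Schur property whenever $V$ does and $\dim F<\infty$.
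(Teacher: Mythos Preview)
Your proposal is correct and follows precisely the approach the paper indicates: imitate the proof of Theorem~\ref{theo:wsc}, replacing weak sequential completeness by the Schur property and relative weak compactness by relative norm compactness. The only points you had to supply beyond the paper's one-line sketch---the characterization of the Schur property via conditional weak compactness, the stability of the Schur property under tensoring with a finite-dimensional space, and the total-boundedness argument replacing the Grothendieck-type lemma---are all routine and handled correctly.
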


We finish the paper with another application of Theorem~\ref{theo:pten-rwc}.

\begin{pro}\label{pro:tensor-rwc}
Let $W_1 \sub X$ and $W_2 \sub Y$ be sets such that
$W_1\otimes W_2$ is conditionally weakly compact in $X\pten Y$. Then either $W_1$ or $W_2$ is relatively norm compact.
\end{pro}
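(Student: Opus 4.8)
The plan is to argue by contradiction, combining Theorem~\ref{theo:pten-rwc} with the finite-dimensionality of the blocks $X_i$ and $Y_j$ (Assumption~\ref{ref:as2}) and the multiplicativity of the projective norm on elementary tensors, $\|x\otimes y\|_{X\pten Y}=\|x\|\,\|y\|$.

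First I would clear away the degenerate cases: if either $W_1$ or $W_2$ is empty or reduced to $\{0\}$, it is relatively norm compact and we are done, so I may assume that $W_1$ and $W_2$ are nonempty and each contains a nonzero vector. Fixing $y_0\in W_2$ with $y_0\neq 0$, the set $W_1\otimes\{y_0\}\sub W_1\otimes W_2$ is conditionally weakly compact, hence bounded, and since $\|x\otimes y_0\|_{X\pten Y}=\|x\|\,\|y_0\|$ this forces $W_1$ to be bounded; symmetrically $W_2$ is bounded. I now assume, towards a contradiction, that neither $W_1$ nor $W_2$ is relatively norm compact.

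The one substantive ingredient I would isolate beforehand is the following description of relative norm compactness through the tail projections of the decomposition: \emph{a bounded set $B\sub Y$ is relatively norm compact if and only if for every $\epsilon>0$ there is a finite set $D\sub J$ with $\sup_{y\in B}\|Q_{J\setminus D}(y)\|\leq\epsilon$} (and likewise for bounded subsets of $X$ with the projections $P_{I\setminus C}$). The ``if'' direction is where Assumption~\ref{ref:as2} enters: $Q_D(B)$ is a bounded subset of the finite-dimensional space $\overline{{\rm span}}(\bigcup_{j\in D}Y_j)$, hence totally bounded, so $B\sub Q_D(B)+\epsilon B_Y$ is totally bounded; the ``only if'' direction is the standard argument with a finite $\epsilon$-net of $B$ and the unconditional convergence of $y=\sum_{j\in J}Q_j(y)$. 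Granting this, since $W_1$ (resp.\ $W_2$) is \emph{not} relatively norm compact there is $\delta_1>0$ (resp.\ $\delta_2>0$) such that $\sup_{x\in W_1}\|P_{I\setminus C}(x)\|\geq\delta_1$ for \emph{every} finite $C\sub I$ (resp.\ $\sup_{y\in W_2}\|Q_{J\setminus D}(y)\|\geq\delta_2$ for every finite $D\sub J$).

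Finally I would apply Theorem~\ref{theo:pten-rwc} to the conditionally weakly compact set $W:=W_1\otimes W_2$ with $\epsilon:=\delta_1\delta_2/2$, which produces finite sets $C\sub I$ and $D\sub J$ with $\|R_{I\setminus C,J\setminus D}(z)\|_{X\pten Y}\leq\epsilon$ for all $z\in W$. Evaluating at $z=x\otimes y$ with $x\in W_1$, $y\in W_2$, and using $R_{I\setminus C,J\setminus D}(x\otimes y)=P_{I\setminus C}(x)\otimes Q_{J\setminus D}(y)$ together with multiplicativity of the projective norm, we get $\|P_{I\setminus C}(x)\|\,\|Q_{J\setminus D}(y)\|\leq\epsilon$ for all such $x,y$; taking the supremum over $x\in W_1$ and over $y\in W_2$ gives $\delta_1\delta_2\leq\epsilon=\delta_1\delta_2/2$, the desired contradiction. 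I do not anticipate a serious obstacle here, since the proposition falls out quickly from Theorem~\ref{theo:pten-rwc}; the only point requiring a little care is that the theorem is applied to the product set $W_1\otimes W_2$ as a whole and returns a \emph{single} pair $(C,D)$, so the contradiction must be squeezed out of that one pair — which is precisely why it is convenient to fix the two constants $\delta_1,\delta_2$ beforehand, uniformly over all finite $C$ and $D$.
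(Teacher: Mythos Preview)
Your proof is correct and follows essentially the same route as the paper: argue by contradiction, use the finite-dimensionality of the blocks to translate non-compactness of $W_1$ and $W_2$ into uniform lower bounds on the tail projections $\|P_{I\setminus C}(x)\|$ and $\|Q_{J\setminus D}(y)\|$, and then contradict Theorem~\ref{theo:pten-rwc} via $\|R_{I\setminus C,J\setminus D}(x\otimes y)\|=\|P_{I\setminus C}(x)\|\,\|Q_{J\setminus D}(y)\|$. Your treatment is in fact slightly more careful than the paper's, since you explicitly dispose of the degenerate cases and establish boundedness of $W_1$ and $W_2$ before invoking the tail-projection characterization of compactness.
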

\begin{proof}
By contradiction, suppose that both $W_1$ and $W_2$ fail to be relatively norm compact. Then there exist $\eta>0$ such that $W_1 \not\subseteq K_1 + \eta B_X$
and $W_2 \not\subseteq K_2 + \eta B_Y$ for every relatively norm compact sets $K_1 \sub X$ and $K_2 \sub Y$. In particular, if $C \sub I$ and $D \sub J$
are finite, then $W_1 \not\subseteq P_C(W_1) + \eta B_X$
and $W_2 \not\subseteq P_D(W_2) + \eta B_Y$, hence there exist $w_1\in W_1$ and $w_2\in W_2$ such that
$$
	\|w_1-P_C(w_1)\|_{X\pten Y}> \eta \quad\mbox{and}\quad
	\|w_2-P_D(w_2)\|_{X\pten Y}> \eta,
$$
and so
\begin{eqnarray*}
	\left\|R_{I \setminus C,J\setminus D}(w_1\otimes w_2)\right\|_{X\pten Y}  & = &
	\left\|P_{I\setminus C}(w_1)\otimes Q_{J\setminus D}(w_2)\right\|_{X\pten Y} \\ & = &
	\|w_1-P_C(w_1)\|_{X\pten Y} \|w_2-Q_D(w_2)\|_{X\pten Y} \geq \eta^2.
\end{eqnarray*}
This contradicts the conclusion of Theorem~\ref{theo:pten-rwc} applied to $W_1\otimes W_2$.
\end{proof}

The embeddability of~$\ell_1$ into projective tensor products was studied in \cite{emm2,bu-alt}. In this direction, we have:

\begin{cor}\label{cor:l1}
If $U \sub X$ and $V \sub Y$ are infinite-dimensional subspaces,
then $U\pten V$ contains a subspace isomorphic to~$\ell_1$.
\end{cor}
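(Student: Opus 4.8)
The plan is to derive this from Proposition~\ref{pro:tensor-rwc} together with Rosenthal's $\ell_1$ theorem. I would set $W_1:=B_U \sub X$ and $W_2:=B_V \sub Y$. Since $U$ and $V$ are infinite-dimensional, neither $B_U$ nor $B_V$ is relatively norm compact, so the contrapositive of Proposition~\ref{pro:tensor-rwc} shows that $B_U \otimes B_V$ is \emph{not} conditionally weakly compact in $X\pten Y$. On the other hand, $B_U\otimes B_V$ is norm bounded, because $\|x\otimes y\|_{X\pten Y}=\|x\|_X\|y\|_Y\leq 1$ whenever $x\in B_U$ and $y\in B_V$. Hence, by Rosenthal's theorem (as recalled in Section~\ref{section:preliminaries}), there is a sequence $(x_n\otimes y_n)_{n\in\N}$ with $x_n\in B_U$ and $y_n\in B_V$ that is equivalent to the usual basis of~$\ell_1$ in $X\pten Y$; in particular there is $c>0$ such that $\|\sum_n a_n (x_n\otimes y_n)\|_{X\pten Y}\geq c\sum_n|a_n|$ for every finitely supported scalar sequence $(a_n)$.

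Next I would transfer this $\ell_1$-sequence from $X\pten Y$ back to $U\pten V$. Let $\iota:U\pten V\to X\pten Y$ be the projective tensor product of the two inclusion operators $U\hookrightarrow X$ and $V\hookrightarrow Y$; it satisfies $\|\iota\|\leq 1$ and $\iota(x\otimes y)=x\otimes y$ for all $x\in U$, $y\in V$. Consider the elementary tensors $z_n:=x_n\otimes y_n\in U\pten V$, which have $\|z_n\|_{U\pten Y}=\|x_n\|_U\|y_n\|_V\leq 1$. Then, for any finitely supported scalars $(a_n)$, on one hand $\|\sum_n a_n z_n\|_{U\pten V}\leq \sum_n|a_n|$, and on the other hand
$$
	\left\|\sum_n a_n z_n\right\|_{U\pten V} \geq \left\|\iota\left(\sum_n a_n z_n\right)\right\|_{X\pten Y}
	= \left\|\sum_n a_n (x_n\otimes y_n)\right\|_{X\pten Y} \geq c\sum_n|a_n|.
$$
Therefore $(z_n)_{n\in\N}$ is equivalent to the usual basis of~$\ell_1$, so $\overline{{\rm span}}\{z_n:n\in\N\}\sub U\pten V$ is isomorphic to~$\ell_1$, which is exactly the conclusion.

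I do not expect a serious obstacle; the only point requiring care is that the projective tensor norm is not injective, so $U\pten V$ need not embed isometrically into $X\pten Y$. This is why one works with the merely norm-nonincreasing map $\iota$ and uses only the \emph{lower} $\ell_1$-estimate pulled back from $X\pten Y$, the matching upper estimate being automatic from the triangle inequality in $U\pten V$.
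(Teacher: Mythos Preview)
Your proof is correct and follows essentially the same route as the paper: both apply Proposition~\ref{pro:tensor-rwc} to $B_U$ and~$B_V$, use Rosenthal's theorem, and pass through the natural operator $\iota=i_U\otimes i_V:U\pten V\to X\pten Y$. The only cosmetic difference is in the order of steps: the paper first observes that $B_U\otimes B_V\sub \iota(B_{U\pten V})$, deduces that $\iota(B_{U\pten V})$ and hence $B_{U\pten V}$ is not conditionally weakly compact (operators preserve conditional weak compactness), and then invokes Rosenthal inside $U\pten V$; you instead extract the $\ell_1$-sequence of elementary tensors in $X\pten Y$ first and then pull the lower estimate back through~$\iota$. (There is a harmless typo: $\|z_n\|_{U\pten Y}$ should read $\|z_n\|_{U\pten V}$.)
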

\begin{proof}
Let $i_U: U \to X$ and $i_V:V \to Y$ be the inclusion operators and consider the operator $T:=i_U\otimes i_V:U \pten V \to X\pten Y$.
Since $B_U \otimes B_V \sub T(B_{U\pten V})$ and both $B_U$ and $B_V$ fail to be norm compact,
from Proposition~\ref{pro:tensor-rwc} it follows that $T(B_{U\pten V})$ is not conditionally weakly compact. Hence
$B_{U\pten V}$ is not conditionally weakly compact and so $U\pten V$ contains a subspace isomorphic to~$\ell_1$.
\end{proof}

\subsection*{Funding} The research is partially supported by grants MTM2017-86182-P 
(funded by MCIN/AEI/10.13039/501100011033 and ``ERDF A way of making Europe'') and 
20797/PI/18 (funded by {\em Fundaci\'on S\'eneca}).

\subsection*{Acknowledgements}
I thank A.~Avil\'{e}s, G.~Mart\'{i}nez-Cervantes and A.~Rueda Zoca
for valuable discussions related to the topic of this note.

\bibliographystyle{amsplain}

\end{document}